\documentclass[review]{elsarticle}
\usepackage[left=3cm,top=2.5cm,right=3cm,bottom=2.5cm]{geometry}
\usepackage{lineno,hyperref}
\usepackage{graphicx,array,delarray}
\usepackage{subfigure}
\usepackage{latexsym}
\usepackage{amscd}
\usepackage{amsfonts, amsmath, amssymb}
\usepackage[utf8]{inputenc}
\usepackage{verbatim}
\usepackage{mathrsfs}
\usepackage{fancyhdr}
\usepackage{float}
\usepackage{palatino}
\modulolinenumbers[5]
%COMANDOS PROPIOS
\newcommand{\RR}{\mathbb{R}}

\newtheorem{theorem}{\textbf{Theorem}}

\newtheorem{definition}[theorem]{\textbf{Definition}}

\newtheorem{proposition}[theorem]{\textbf{Proposition}}
\newtheorem{remark}[theorem]{Remark}

\newenvironment{proof}[1][Proof]{\noindent\textbf{#1.} }{\ \rule{0.5em}{0.5em}}

%\journal{Acta Applicandae Mathematicae}

%%%%%%%%%%%%%%%%%%%%%%%
%% Elsevier bibliography styles
%%%%%%%%%%%%%%%%%%%%%%%
%% To change the style, put a % in front of the second line of the current style and
%% remove the % from the second line of the style you would like to use.
%%%%%%%%%%%%%%%%%%%%%%%

%% Numbered
%\bibliographystyle{model1-num-names}

%% Numbered without titles
%\bibliographystyle{model1a-num-names}

%% Harvard
%\bibliographystyle{model2-names.bst}\biboptions{authoryear}

%% Vancouver numbered
%\usepackage{numcompress}\bibliographystyle{model3-num-names}

%% Vancouver name/year
%\usepackage{numcompress}\bibliographystyle{model4-names}\biboptions{authoryear}

%% APA style
%\bibliographystyle{model5-names}\biboptions{authoryear}

%% AMA style
%\usepackage{numcompress}\bibliographystyle{model6-num-names}

%% `Elsevier LaTeX' style
\bibliographystyle{elsarticle-num}
%%%%%%%%%%%%%%%%%%%%%%%

\begin{document}

\begin{frontmatter}

\title{\textbf{Existence of a $T$-periodic solution for the monodomain model corresponding to an isolated ventricle due to ionic-diffusive relations}}

%% or include affiliations in footnotes:
\author[mymainaddress]{Andr\'es Fraguela}
\author[mymainaddress]{Ra\'ul Felipe-Sosa\corref{mycorrespondingauthor}}
\cortext[mycorrespondingauthor]{Corresponding author}
\ead{rfsosa@fcfm.buap.mx}
\author[mysecondaryaddress1]{Jacques Henry}
\author[mysecondaryaddress2]{Manlio F. M\'arquez}
%%%%%%%%%%%%%%%%%%%%%%%%%%%%%%%%%%

\address[mymainaddress]{Facultad de Ciencias Físico Matemáticas-BUAP, Puebla, M\'exico.}
\address[mysecondaryaddress1]{CARMEN - Modélisation et calculs pour l'électrophysiologie \textit{cardiaque, Inria Bordeaux - Sud-Ouest}}
\address[mysecondaryaddress2]{Instituto de Cardiología "Ignacio Chavez", México}

\begin{abstract}
In this paper, we find relations between the ionic parameters and the diffusion parameters which are sufficient to ensure the existence of a periodic solution for a well-known monodomain model in a weak sense. We make use of the method of approximation of Faedo-Galerkin to prove the existence of weak periodic solutions of the monodomain model for the electrical activity of the heart assuming that it is periodically activated in its boundaries. Actually, this periodic solution has the same period of activation. Finally, we reflect on how these ionic-diffusive relations are useful to explain the pathophysiology of some rhythm disorders.
\end{abstract}

\begin{keyword}
Monodomain model \sep Faedo-Galerkin scheme \sep weak formulation \sep weak periodic solutions
\MSC[2010] 00-01\sep  99-00
\end{keyword}

\end{frontmatter}

\section{Introduction}
The bidomain model is one of the most used systems for simulating the electrical activity of the heart. This model considers an active myocardium on a macroscopic scale by relating membrane ionic current, membrane potential, and extracellular potential, see Henriquez \cite{Henriquez} for more details. We consider that the heart occupies a volume $\Omega \subset \RR^3$ where for each point it is possible to define two electrical potentials which are functions of space and time: the intracellular potential and the extracellular potential, associated to the intracellular and extracellular domains respectively, separated by the cell membrane, see Sundnes, J., \cite{Sundnes}. Introduced in 1969 (by Schmidt \cite{Schmitt}), (Clerc \cite{Clerc}) and firstly developed in 1978 (Tung \cite{Tung}), (Miller \cite{Miller1}, I), the bidomain model was used to derive forward models, which compute the extracellular and body-surface potentials from given membrane potentials (Miller \cite{Miller1}, I), (Gulrajani \cite{Gulra1}), (Miller \cite{Miller2}, II) and (Gulrajani \cite{Gulra2}). Later, the bidomain model was used to take into account several models for membrane potential together, in order to create a  reaction-diffusion system (Barr \cite{Barr}), (Roth \cite{Roth}), which simulates the spreading of the membrane potential based on the following three premises: the membrane model, the bidomain model, and the Maxwell’s equations. In (Neu \cite{Nue}), (Ambrosio \cite{Ambrio}) and (Pennacchio \cite{Penn}), other mathematical derivations of macroscopic bidomain type models can be found, which were deducted from the microscopic properties of tissue and by using asymptotic and homogenization methods along with basic physical principles.

The monodomain model was conceived as a simplification of the bidomain model, assuming that the intracellular current conductivity is proportional to the extracellular current conductivity, see \cite{Sundnes}. This way, a simpler system of equations is built. This system is composed of one parabolic equation for the membrane potential and a system of ordinary equations for the activation variables. 

The monodomain model which, has advantages from both the mathematical analysis and computational point of view, was actually developed before the first bidomain model. However, surprisingly few papers have compared the monodomain with the bidomain results. Those presenting this comparison, are not very different from each other (Vigmond \cite{Vig}). Moreover, numerical simulations of the monodomain model have provided realistic results (Leon \cite{Leon}), (Hren \cite{Hren}), (Huiskamp \cite{Huis}), (Bernus \cite{Bernus1}), (Trudel \cite{Trudel}) and (Berenfeld \cite{Beren}). In (Potse \cite{Potse}), the impact of the monodomain model has been researched in an isolated human heart following the context of simulated propagation of the electrical activity of the heart. Results were compared with the bidomain model from a numerical point of of view. In this work, it is shown that the differences between the two mentioned models are small, even if extracellular potentials are influenced by fluid-filled cavities. All the properties of the membrane and extracellular potentials corresponding to the bidomain model have been accurately reproduced by the monodomain model. A formal derivation of the monodomain equation for the version presented here it can be found in \cite{Sundnes}.

On the other hand, from the qualitative point of view, the bidomain model turns to be complex, due to several aspects. Some of them are the following
\begin{itemize}
\item it is a nonlinear model,
\item it consists of a system of joined equations that include three different kinds: parabolic, elliptic, and ordinary differential equations. 
\item Moreover, taking into account that in the system coefficients are not necessarily smooth, probably there are no classic solutions. In this regard, results about the existence of a unique solution for the Cauchy problem, the existence of periodic solutions, and the stability of this type of solution demand the use of a weak or variational framework to study these systems of equations.  
\end{itemize}
Literature has a few references dealing with the well-posedness of the bidomain model. The most important of these reference are Colli-Franzone and Savar\'e's paper (Colli \cite{Colli}), Veneroni's technical report (Veneroni \cite{Vero}), and Y. Bourgault, Y. Coudière and C. Pierre's paper (Bourgault \cite{Yves}). In \cite{Colli}, the existence and uniqueness of global solutions in the bidomain model time are proven, although its approach applies only to particular cases of ionic models, typically of the form $f(u,w) = k(u) + \alpha w, $ $g(u,w) = \beta u + \gamma w,$ where $k \in C^1(\RR)$ satisfies $\inf_{\RR} k' > -\infty$. An ionic model having this form is the cubic-like FitzHugh-Nagumo model (Fitzhugh \cite{Fitz}), which is important for a qualitative understanding of the action potential propagation, however its applicability to myocardial excitable cells is limited (Keener \cite{Kee}), (Panfilov \cite{Pan}). Furthermore, from the techniques developed in (Colli \cite{Colli}), it is not possible to conclude the existence of solution for other simple two-variable ionic models widely used in the literature for modeling myocardial cells, such as the Aliev-Panfilov (Aliev \cite{Aliev}) and Rogers-McCulloch (Rogers \cite{McCu}) models. In Colli-Franzone and Savarés \cite{Vero}, these kind of results have been extended to a more general and realistic ionic model, namely those taking the form of Luo-Rudy model (Luo \cite{Luo}). However, this extension does not include the Aliev-Panfilov and Rogers-McCulloch models. On the other hand, in Reference \cite{Yves}, global weak solutions are obtained for ionic models which can be written as a single ODE with polynomial nonlinearities. These last ionic models include the FitzHugh-Nagumo model and other simple models that are more adapted to myocardial cells, such as the Aliev-Panfilov and Rogers-McCulloch models. In Hernandez \cite{nues}, the authors obtain results about the existence and uniqueness of weak solutions for the Cauchy problem, but the monodomain model is analyzed in an isolated ventricle, and it is activated by the Purkinje fibers. Moreover, the ionic model is the Rogers-McCulloch one. In this regard, homogeneous boundary conditions of Neumann type were considered.

In relation to the existence of periodic solutions, there are some relevant recent papers. For example, in \cite{Hieber1}, M. Hieber et. al. prove the periodic version of the Da Prato-Grisvard theorem as well as its extension to semi-linear evolution equations. That is to say, from this outcome they proved the existence of strong $T$-periodic solution of the bidomain model, that is, in the context of the corresponding abstract evolution problem. In their research, they considered $T$-periodic external sources of intra and extracellular current $I_i, I_e$. However, the method of \cite{Hieber1} is applied to just a few models describing the ionic transport like the FitzHugh-Nagumo, Aliev-Panfilov, or Rogers-McCulloch models. It is worth noting that this methodology is very general. In addition, although it is applied to concrete ionic models, the results of the existence of periodic solutions are not given in terms of relations between the parameters of the model. \textbf{In other words, the method does not include a study of the ranges of values of the model parameters for which the mathematical results are true.} These relations are very important because they explain which are the physiological causes that underlay the generation or lost of cardiac rhythm. We must say the \cite{Giga} works with the same approach.  

In our paper, we study the monodomain model in an isolated region of the heart (a ventricle), in which the $T$-periodic activation is located at the endocardium, and we consider an ionic model of Roger-McCulloch type. In this case, we use the variational formulation of the boundary problem from which is obtained a notion of ``weak" solution is obtained as it happened in \cite{Yves} and \cite{nues}. Subsequently, we prove the existence of a weak $T$-periodic solution.

There is twofold difference between the present work and previous studies. First, we consider the activation in the inner wall of the heart. In this sense, the activation is reflected in the model as a boundary condition of Neumann type. On the other hand, our method allows us to find relations between the ionic parameters and the diffusive parameters which are sufficient conditions for the existence of periodic solutions. With these relations, we are able to obtain a result that could explain some aspects related to the generation of ventricular arrhythmias, specifically with the "degeneration" of a stable rhythm (sinus rhythm) toward ventricular fibrillation. 
 
\subsection{Monodomain model for the electrical activity of the heart}
In this paper, the isolated ventricle is represented as a bounded region $\Omega \subset \RR^3,$ whose boundary $\partial \Omega$ is composed of two disjoint regions, $\Gamma_0$ and $\Gamma_1$. The component $\Gamma_0$ depicts the epicardium (in direct contact with the insulating medium), while $\Gamma_1$ represents the endocardium (where electrical activation takes place).

To describe the generation and propagation of electrical activity in a ventricle, we use the monodomain model, see \cite{Sundnes}. This model can be formulated by means of a parabolic reaction-diffusion partial differential equation with boundary conditions on $\Gamma_0$ and $\Gamma_1$. Joined with this equation, we have a system of ordinary differential equations modeling the ionic currents.

The system of equations reads as follows:
\begin{align}
\label{ecC}&C_m\frac{\partial \widehat{u}}{\partial t} + f_{ion}(\widehat{u},\widehat{w}) - \nabla\cdotp\left(\sigma(\mathbf{x})\nabla \widehat{u}\right) = 0, 
\;\;\; (t,\mathbf{x}) \in (0,\infty)\times \Omega,\\
\label{ecCI}&\frac{\partial \widehat{w}}{\partial t} = g(\widehat{u},\widehat{w}),\;\;\; \;\;\; (t,\mathbf{x}) \in (0,\infty)\times \Omega,\\ 
\label{condFP1}&\left(\sigma(\mathbf{x}) \nabla(\mathbf{x}) \widehat{u}\right)\cdotp \mathbf{n}(\mathbf{x}) = 0,\;\;\; (t,\mathbf{x}) \in (0,\infty)\times \Gamma_0,\\
\label{condFP}&\left(\sigma(\mathbf{x}) \nabla \widehat{u}\right)\cdotp \mathbf{n}(\mathbf{x}) = s(t)\varphi(\mathbf{x}),\;\;\; (t,\mathbf{x}) \in (0,\infty)\times 
\Gamma_1.
\end{align}
The equation (\ref{ecC}) describes the balance of current in the heart muscle, where:
\begin{itemize}
\item $\widehat{u}, \widehat{w}$ are the membrane potential and the activation variable, respectively,
 \item $C_m\frac{\partial \widehat{u}}{\partial t},$ is the capacitive current,
 \item $-\nabla\cdotp \left(\sigma \nabla \widehat{u}\right),$ is the ohmic current,
 \item  and $f_{ion}(\widehat{u},\widehat{w}),$ is the ionic current induced by the 
cellular activation. 
 
\end{itemize}
The equation (\ref{ecCI}) describes the dynamic of the 
activation and inhibition variables for the ionic channels.
In this model, the anisotropic characteristics of the tissue 
are given by the tensor $\sigma$. Also, $C_m = \chi c_m,$ where $c_m > 0$ is the cellular membrane capacitance, and  $\chi > 0$ denotes its area per unit of volume. The function $s: (0,\infty) \rightarrow \RR$ is $\widetilde{T}$-periodic (sinus rhythm) and it corresponds to the period activation of endocardium which is given through Purkinje fibers.  Finally, $\varphi: \Omega \rightarrow \RR,$ is the spatial density of activation and thus $s(t)\varphi(\mathbf{x})$ represents the external current source. In \eqref{condFP1} and \eqref{condFP}, $\mathbf{n}(\mathbf{x})$ represents the unitary outside normal in each point $\mathbf{x}$ of boundary. 

In this article, we consider a phenomenological model for the ionic currents, specifically, the Rogers-McCulloch model, see \cite{McCu}.
\begin{align}\label{fion}
\nonumber
f_{ion}(\widehat{u},\widehat{w}) &= a_1\left(\widehat{u} - u_{res}\right)\left(\widehat{u} - u_{th}\right)
\left(\widehat{u} - u_{peak}\right) \\
&+ a_{2}\left(\widehat{u} - u_{res}\right)\widehat{w},
\end{align}
where 
\begin{align}
a_1 = \frac{c_1}{u^2_{amp}},\;\;\;\; a_2 = \frac{c_2}{u_{amp}}
\end{align}
and 
\begin{align}\label{g}
g(\widehat{u},\widehat{w}) = b\left(\widehat{u} - u_{res} - c_3\widehat{w}\right).
\end{align}
Here, $u_{res} < 0, u_{peak} > 0$ and $u_{amp}$ are the resting value, peak value and total amplitude of the cardiac action potential, respectively, $u_{amp} = u_{peak} - u_{res} > 0$
and $u_{th} = u_{res} + au_{amp}.$ This model is the reparameterized FitzHugh-Nagumo model. 

In a more realistic context the parameters should depend on spatial variables, while here we assume that the parameters are constant. For the sake of simplicity in computations and to facilitate a proper physiological interpretation of results, we will make the following change of variables and time reparameterization:
\begin{align*}
\widehat{u} = u + u_{res},\;\;\; \widehat{w} = \xi w,
\end{align*}
and 
\begin{align*}
t = \epsilon \tau,
\end{align*}
where $\xi, \epsilon$ are auxiliary parameters that will be useful later. With this change of variables the new unknown functions are 
\begin{align*}
u(\tau,\mathbf{x}) = \widehat{u}(\epsilon\tau,\mathbf{x}) - u_{res},\;\;\; w(\tau,\mathbf{x}) = \frac{1}{\xi}\widehat{w}(\epsilon\tau,\mathbf{x}).
\end{align*}
In these new variables the boundary problem has the following form
\begin{align}
\label{ecCn}&\frac{\partial u}{\partial \tau} + \widehat{f}(u,w) - \nabla\cdotp\left(\widehat{\sigma}\nabla u\right) = 0, 
\;\;\; (\tau,\mathbf{x}) \in (0,\infty)\times \Omega,\\
\label{ecCIn}&\frac{\partial w}{\partial \tau} = \widehat{g}(u,w),\;\;\; \;\;\; (\tau,\mathbf{x}) \in (0,\infty)\times \Omega,\\ 
\label{condFP1n}&\left(\widehat{\sigma} \nabla u\right)\cdotp \mathbf{n}(\mathbf{x}) = 0,\;\;\; (\tau,\mathbf{x}) \in (0,\infty)\times \Gamma_0,\\
\label{condFPn}&\left(\widehat{\sigma} \nabla u\right)\cdotp \mathbf{n}(\mathbf{x}) = \widetilde{s}(\tau)\varphi(\mathbf{x}),\;\;\; (\tau,\mathbf{x}) \in (0,\infty)\times 
\Gamma_1,
\end{align}
where
\begin{align}
\label{fionN}\widehat{f}(u,w) &= \frac{\epsilon c_4}{C_m}u + f(u,w),\\
\label{gN}\widehat{g}(u,w) &= \epsilon b(u - \xi c_3w),
\end{align}
\begin{align}\label{fN}
f(u,w) = \frac{\epsilon}{C_m}\left(a_1u^3 + \xi a_2uw - a_1(u_{pr} + u_{tr})u^2\right),
\end{align}
$u_{pr} = u_{peak} - u_{res}, u_{tr} = u_{th} - u_{res},$ and $c_4 = a_1u_{tr}u_{pr},$ are positive constants. Also, $\widetilde{s}(\tau) = \frac{\epsilon}{C_m}s(\epsilon \tau)$ and $\widehat{\sigma} = \frac{\epsilon}{C_m}\sigma$.

Note that, if the function $s$ is $T$-periodic then $\widetilde{s}$ is $\frac{T}{\epsilon}$-periodic, and if $\widetilde{s}$ is $T$-periodic then $s$ is $\epsilon T$-periodic. The same applies to unknown functions $u, w$. In what follows, we assume that $\widetilde{s}$ is $T$-periodic where $T = \frac{\widetilde{T}}{\epsilon}$. Below, we will prove the existence of $T$-periodic solutions of problem \eqref{ecCn}-\eqref{condFPn}.

From this, we will conclude the existence of a $\widetilde{T}$-periodic solution of problem \eqref{ecC}-\eqref{condFP}.
\subsection{Some notations and assumptions}\label{sectioNota}
We denote $H = L^2(\Omega)$, the square summable functions space on $\Omega$, and $V = H^1(\Omega)$, the corresponding Sobolev space. It is well known that both are Hilbert spaces. We will also consider the Banach spaces $L^p(\Omega)$, when $2 \leq p \leq 6$. We have the Gelfand triple 
\begin{align*}
V \subset H \subset V^{*},
\end{align*}
with dense and continuous embeddings, and where $\left\{V,V^{*}\right\}$ forms an adjoint pair with duality product $\left\langle\cdot, \cdot\right\rangle_{V\times V^{*}}$ satisfying 
\begin{align*}
\left\langle v,h\right\rangle_{V\times V^{*}} = \left(v,h\right),\;\;\; \hbox{for all $v \in V, h \in H,$}
\end{align*}
here, $\left(\cdot,\cdot\right)$ represents the inner product in $H$. Observe that from the Sobolev embedding theorem, the following inclusions
$$V \subset L^p(\Omega) \subset H \subset L^{p'}(\Omega)
\subset V^{*}$$
are continuous for all $2 \leq p \leq 6.$ In particular, in this paper we will make use of these embeddings for $p = 4$ in this paper. In what follows, the next specific assumptions will be taken into account
\begin{description}
\label{H1}\item[(h1)] $\Omega$ has boundary $\partial \Omega$ of class $C^2$.
\item[(h2)] $\sigma$ is a symmetric matrix-function of the spatial variable $\mathbf{x} \in \Omega$, with entries in $C^1(\overline{\Omega})$, and such that there are positive constants $m$ and $M$ such that
\begin{align*}
0 < m|\xi|^2 \leq \xi^t \sigma(x)\xi \leq M|\xi|^2\;\;\; 
\hbox{for all $\xi \in \RR^3\setminus \{0\},$}
\end{align*}
for almost all $\mathbf{x} \in \Omega.$
\item[(h3)] $\widetilde{s} \in L^{\infty}(0,\infty),$ and $\varphi \in L^2(\Gamma_1).$ Besides, we assume that $\widetilde{s}(t)$ is $T$-periodic.
\end{description}
\section{Variational formulation, notion of weak solution} 
In this section, we give a variational formulation for the boundary value problem (\ref{ecCn})-(\ref{condFPn}). In addition, we define the notion of weak solution for this problem. From now on, we return to the notation $t$ for the time variable.

\begin{remark}\label{com1}
As it can be easily seen in (\ref{fionN}) and (\ref{gN}) that the functions $\widehat{f}$ and $\widehat{g}$ have the form $\widehat{f}(u,w) = f_1(u) + f_2(u)w$ and $\widehat{g}(u,w) = g_1(u) + g_2w$,
where $p = 4$,
\begin{align*}
|f_1(u)| \leq l_1 + l_2|u|^{p-1},\;\;\;|f_2(u)| \leq a_2|u|^{p/2 - 1},\;\;\;|g_1(u)| \leq \frac{b}{2}\left(1 + |u|^{p/2}\right),
\end{align*}
and
\begin{align*}
l_1 = \frac{a_1 \epsilon}{C}\left(\frac{1}{3}(u_{tr} + u_{pr}) + \frac{2}{3}u_{tr}u_{pr}\right),\;\;\;l_2 = \frac{a_1 \epsilon}{C}\left(1 + \frac{2}{3}(u_{tr} + u_{pr}) + \frac{1}{3}u_{tr}u_{pr}\right).
\end{align*}
Applying Lemma 25 of \cite{Yves}, we have that the maps $(u,w) \in L^p(\Omega)\times H \mapsto \widehat{f}(u,w) \in L^{p'}(\Omega),$ and $(u,w) \in L^p(\Omega)\times H \mapsto \widehat{g}(u,w) \in H,$ are well defined. 
Furthermore, one shows that 
\begin{align*}
\|\widehat{f}(u,w)\|_{L^{p'}(\Omega)} &\leq A_1|\Omega|^{1/p'} + A_2\|u\|^{p/p'}_{L^p} 
+ A_3\|w\|^{2/p'}_{H},\\
\|\widehat{g}(u,w)\|_{H} &\leq B_1|\Omega|^{1/2} + B_2\|u\|^{p/2}_{L^p}
+ B_3\|w\|_{H},
\end{align*}
where $A_i, B_i$ with $i = 1, 2, 3$ are positive constants, given by
\begin{align*}
&A_1 = l_1,\;\; A_2 = l_2 + \frac{2}{3}\xi a_2,\;\; A_3 = \frac{a_2 \xi}{3},\\
&B_1 = \frac{\epsilon b}{2},\;\; B_2 = \frac{\epsilon b}{2},\;\; B_3 = \xi c_3.
\end{align*}
Note that these constants depend on the parameters of the model, specifically those involved in $\widehat{f}(u,w)$ and $\widehat{g}(u,w)$. In our paper, we obtain a result about the existence of periodic solutions which imposes some restrictions on these parameters that can be interpreted in a physiological sense. Besides, in Section 6.3 of \cite{Yves} the existence of constants $n, \lambda, r, q > 0$ is also established, such that 
\begin{align*}
\lambda u\widehat{f}(u,w) - w\widehat{g}(u,w) \geq n|u|^{p} - r(|u|^2 + |w|^2) - q, 
\end{align*}
for all $(u,w) \in \RR^2.$
\end{remark}

Suppose now that $(u,w)$ is a classic vector solution of problem (\ref{ecCn})-(\ref{condFPn}),
and multiply (\ref{ecCn}) by a function $v \in V$ and (\ref{ecCIn}) by some $h \in H$, respectively. While integrating by parts in the previously obtained equations, we have:
\begin{align*}
&\int_{\Omega}\frac{\partial u}{\partial \tau} v + \int_{\Omega}\widehat{\sigma} \nabla u\nabla v
+ \int_{\Omega}\widehat{f}(u,w)v = \widetilde{s}(\tau)\int_{\Gamma_1}\varphi v ds,\\
&\int_{\Omega}\frac{\partial w}{\partial \tau} h - \int_{\Omega}\widehat{g}(u,w)h = 0,
\end{align*}
from which it is obtained that
\begin{align}
&\label{FormV1}\frac{\partial}{\partial \tau}\int_{\Omega}uv + \int_{\Omega}\widehat{\sigma} \nabla u\nabla v
+ \int_{\Omega}\widehat{f}(u,w)v = \widetilde{s}(\tau)\int_{\Gamma_1}\varphi v ds,\\
&\label{FormV2}\frac{\partial}{\partial \tau}\int_{\Omega}w h + \int_{\Omega}\widehat{g}(u,w)h = 0.
\end{align}
Let $\phi \in \mathcal{D}(I)$ be an infinitely differentiable function with compact support in an interval $I = (0,\hat{t})$ for $\hat{t} > 0$. Multiplying in \eqref{FormV1} and \eqref{FormV2} by $\phi$ and integrating by parts, we have
\begin{align}
&\label{FormV3}-\int_I\int_{\Omega}uv\phi' + \int_I\int_{\Omega}\widehat{\sigma} \nabla u\nabla v\phi
+ \int_I\int_{\Omega}\widehat{f}(u,w)v\phi = \int_I\int_{\Gamma_1}\widetilde{s}\varphi v\phi ds,\\
&\label{FormV4}-\int_I\int_{\Omega}w h\phi' - \int_I\int_{\Omega}\widehat{g}(u,w)h\phi = 0,
\end{align}
thus, if $(u,w) \in V\times H,$ the integrals in the left-side of (\ref{FormV3})-(\ref{FormV4}) are finite by Remark \ref{com1}. 

Define the functional $\varphi^{*} \in V^{*}$ given by
\begin{align*}
\left\langle v,\varphi^{*}\right\rangle_{V\times V^{*}} = \int_{\Gamma_1}\varphi v,\;\;\; \hbox{for $v \in V,$}
\end{align*}
which satisfies $\left\|\varphi^{*}\right\|_{V^{*}} \leq \mathcal{N}_\mathcal{T}\left\|\varphi\right\|_{L^2(\Gamma_1)}$, where $\mathcal{N}_{\mathcal{T}}$ is the norm of the trace operator from $V$ in $L^2(\Gamma_1)$. 

Before giving the definition of weak solution, we will establish some properties of the bilinear form
\begin{align}\label{biform}
a(u,v) := \int_{\Omega}\widehat{\sigma} \nabla u\nabla v + \frac{\epsilon c_4}{C}\int_{\Omega}uv,
\end{align} 
defined in $V \times V.$
\subsection{Some important results about the bilinear form $a(u,v)$}\label{epi1}
The bilinear form $a(u,v)$, defined in \eqref{biform}, will play a crucial role in our paper. Hence, it is required to establish some of the bilinear form properties which will be used below.  

Taking into account the properties of the matrix $\widehat{\sigma}$, the existence of the positive constants $\alpha$ and $\mathcal{M}$ can be proven, such that
\begin{align*}
\begin{array}{c}
\left|a(u,v)\right| \leq \mathcal{M}\left\|u\right\|_{V}\left\|v\right\|_{V},\\
\alpha\left\|u\right\|^2_{V} \leq a(u,u) + \alpha\left\|u\right\|^2_{H},
\end{array}\;\;\; \hbox{for all $u,w \in V,$}
\end{align*}
that is, $a$ is a continuous coercive bilinear form. It shows there is a linear operator $\mathcal{A}: V \subset V^{*} \rightarrow V^{*},$ which is bounded, sectorial, closed and densely defined, such that 
\begin{align*}
a(u,v) = \left\langle v, \mathcal{A}u\right\rangle_{V\times V^{*}}.
\end{align*}
Next, we consider the restriction of $\mathcal{A}$ to $H$, denoted by $A$. Concretely, the operator $A$ is defined in the following form
\begin{align*}
\left\{
\begin{array}{c}
\mathcal{D}(A) = \left\{v \in V; \mathcal{A}v \in H\right\},\\
Av = \mathcal{A}v.
\end{array}
\right.
\end{align*} 
We need more details about $A$. Since, we have assumed some previous conditions on $\sigma$ and $\Omega$, given by \textbf{(h1)} and \textbf{(h2)}, it follows that 
\begin{align*}
\left\{
\begin{array}{c}
\mathcal{D}(A) = \left\{v \in H^2(\Omega); \left.\left(\widehat{\sigma}\nabla v\right)\cdot \mathbf{n}\right|_{\partial \Omega} = 0\right\},\\
Av = -\nabla \cdot\left(\widehat{\sigma} \nabla v\right) + \frac{\epsilon c_4}{C_m} v,
\end{array}
\right.
\end{align*}
consequently, $A$ is a self-adjoint and positive operator which has a compact inverse. Then, there is an increasing sequence of eigenvalues $0 < \lambda_0 \leq \lambda_1 \leq \ldots \leq \lambda_n \leq\ldots;$ associated to a corresponding sequence of eigenvectors $\left(\psi_i\right)^{\infty}_{i = 0} \subset V$, such that 
\begin{align*}
\lim_{n \rightarrow \infty} \lambda_n = \infty,
\end{align*}
and the eigenvector sequence is an orthonormal basis for $H$. Note that
\begin{align*}
a(\psi_i,v) = \left\langle v, \mathcal{A}\psi_i\right\rangle_{V\times V^{*}} = \left(A\psi_i,v\right) = \lambda_i\left(\psi_i,v\right),
\end{align*}
in particular
\begin{align*}
a(\psi_i,\psi_i) = \lambda_i.
\end{align*}
On the other hand, since $A$ is sectorial, the family of bounded operators $e^{-tA}$ ($t\geq 0$) can be defined such that $e^{-0A} = I_d$ ($I_d$ being the identity operator in $H$). This family of operators it is said to be a semigroup and it satisfies the following properties: given $\left(\lambda_i\right)^{\infty}_{i = 0}$ and $\left(\psi_i\right)^{\infty}_{i = 0}$ as before, each operator $e^{-tA}$ has spectrum $\left(e^{-\lambda_i t}\right)^{\infty}_{i = 0}$, and $\left(\psi_i\right)^{\infty}_{i = 0}$ are the corresponding eigenvectors, for all $t>0$. Furthermore, $1 \notin \sigma(e^{-tA})$. On the other hand, $\left(I_d - e^{-TA}\right)^{-1}$ is a bounded densely defined linear operator, whose spectrum is $\left((1-e^{-\lambda_iT})^{-1}\right)^{\infty}_{i = 0}$ and the eigenvectors are $\left(\psi_i\right)^{\infty}_{i = 0}$, respectively. Note that $\left\|e^{-tA}\right\|_{\mathcal{L}(H)} \leq 1.$

Next, we give the definition of weak solutions for the variational formulation of the problem \eqref{ecCn}-\eqref{condFPn}. Below, we use of notation introduced in Section \ref{sectioNota}.

\begin{definition}[Weak solution]\label{wealsolution}
Fix $t_0 > 0$, let be $I = (0,t_0)$ and $Q_I = (0,t_0)\times \Omega$. A pair $(u,w)$ is said to be a local weak solution of \eqref{ecCn}-\eqref{condFPn} in the interval $I = (0,t_0)$ if this pair satisfies
\begin{align*}
u \in L^p(Q_I)\cap L^2(I;V),\; w \in L^2(Q_I),
\end{align*}
and it verifies the following equalities, in $\mathcal{D}'(I)$,
\begin{align}\label{Varprobm}
\begin{array}{c}
\dfrac{d}{d\tau}\left(u(\tau),v\right) + a(u(\tau),v) + \displaystyle\int_{\Omega}f(u,w)v = 
\widetilde{s}(\tau)\left\langle v, \varphi^{*}\right\rangle_{V\times V^{*}}\\\\
\dfrac{d}{d\tau}\left(w(\tau),h\right) + \displaystyle\int_{\Omega}\widehat{g}(u,w)h = 0.
\end{array}\hbox{for all $v \in V, h \in H, and$}
\end{align}

On the other hand, a pair $(u,w)$ is said to be a global weak solution if it is a local weak solution for any $t_0 > 0$.
\end{definition}

\section{Existence of a global $T$-periodic weak solution as the limit of a sequence of $T$-periodic solutions of Faedo-Galerkin systems}
In this section, we must prove the existence of a global $T$-periodic weak solution of the monodomain model given by \eqref{ecCn}-\eqref{condFPn} in the sense of Definition \ref{wealsolution}. First of all, we will obtain a sequence $\left\{(u_m,w_m)\right\}^{\infty}_{m = 0}$ of $T$-periodic functions which converges, in a suitable sense, to a global $T$-periodic weak solution of \eqref{ecCn}-\eqref{condFPn}. This sequence of functions is formed by solutions of a system of ordinary equations of dimension $2m + 2$ for each $m = 0,1,\ldots,$ called the Faedo-Galerkin system.As we will see later, the fact of this sequence evaluated in $t = 0$ being uniformly bounded is a sufficient condition for this sequence to converge. The Faedo-Galerkin system is defined as follows: let $\left(\psi_i\right)^{m}_{i = 0} \subset V$ denote the first $m+1$ eigenvectors of the operator $A$, associated to the eigenvalues $\left(\lambda_i\right)^{m}_{i = 0}$. Define the linear subspace 
\begin{align}\label{subspace1}
V_m = \hbox{span}\left\{\psi_0, \psi_1,\ldots,\psi_m\right\},
\end{align}
considered as a vector subspace of $V$ with the following equivalent norm 
 $$\sqrt{\frac{\epsilon c_4}{C}\left\|\cdot\right\|^2_{H} +\left\|\sigma\nabla\left(\cdot\right)\right\|^2_{H}},$$ and denote it by $H_m$ when is it considered as vector subspace of $H$. We also introduce
\begin{align}\label{resol1}
u_m(t) = \sum^m_{i = 0}u^{(m)}_i(t)\psi_i \in V_m,\;\;\; w_m(t) = \sum^m_{i = 
0}w^{(m)}_i(t)\psi_i \in H_m,
\end{align}
note that, for all $t$ $$\left\|u_m(t)\right\|^2_{V} = \sum^m_{i = 0}\lambda_i\left|u^{(m)}_i(t)\right|^2,$$ and $$\left\|w_m(t)\right\|^2_{H} = \sum^m_{i = 0}\left|w^{(m)}_i(t)\right|^2.$$

We recall that the weak equations in \eqref{Varprobm} are satisfied for all $v \in V$ and $h \in H$, respectively. In particular, they are fulfilled by the functions $\left\{\psi_0, \psi_1,\ldots,\psi_m\right\} \subset V \subset H$. If we also assume that $(u_m, w_m)$ is a global weak solution of the problem, then the coefficients $\left(u^{(m)}_0(t),w^{(m)}_0(t),\ldots,u^{(m)}_m(t),w^{(m)}_m(t)\right)$ satisfy the following system of ordinary differential equations:
\begin{align}\label{FGsys}
\begin{array}{c}
u^{(m)'}_i =  - \lambda_{i} u^{(m)}_i 
- \displaystyle\int_{\Omega}f(u_m,w_m)\psi_i + \widetilde{s}(t)\left\langle \psi_i,\varphi^{*}\right\rangle_{V\times V^{*}}\\\\
w^{(m)'}_i = \epsilon b(u^{(m)}_i - c_3\xi w^{(m)}_i),
\end{array}\;\;\; i = 0,\ldots,m,
\end{align}
where $\widehat{g}$ has been replaced by its explicit expression given by \eqref{gN}. This system of equations has $2m + 2$ equations and the same number of unknowns
$$(u^{(m)}_0,w^{(m)}_0\ldots,u^{(m)}_i,w^{(m)}_i,\ldots,u^{(m)}_m,w^{(m)}_m).$$ The system \eqref{FGsys} is the so-called Faedo-Galerkin system. If we fix the initial conditions 
\begin{align}\label{FGintc}
u^{(m)}_i(0) = u^{(0m)}_i,\;\;\; w^{(m)}_i(0) = w^{(0m)}_i,
\end{align}
then \eqref{FGsys}-\eqref{FGintc} is a Cauchy problem for a system of ordinary differential equations, in which the classic theory of ODEs can be applied. In the same context of \eqref{resol1}, we define the following initial conditions
\begin{align}\label{FGintc1}
u_{0m} := \sum^{m}_{i = 0}u^{(0m)}_i\psi_i,\;\;\; w_{0m} := \sum^{m}_{i = 0}w^{(0m)}_i\psi_i,
\end{align}
then, we say that a pair $(u_m,w_m)$ by means of \eqref{resol1} is a solution of the Cauchy problem associated to the Faedo-Galerkin system \eqref{FGsys} with initial conditions \eqref{FGintc1}, if the respective coefficients $\left\{u_i^{(m)}(t),w_i^{(m)}(t)\right\}^{m}_{i = 0}$ are solutions of the system of ODEs \eqref{FGsys} with the initial conditions $\left\{u_i^{(0m)},w_i^{(0m)}\right\}^{m}_{i = 0}$, being the components of this last vector, the coefficients of the pair $(u_{0m},w_{0m})$. It is well known that problem \eqref{FGsys}-\eqref{FGintc} has a unique solution $(u^{(m)}_{i},w^{(m)}_{i})^m_{i = 0}$ which is defined on a maximum interval $[0,t_m)$. Under suitable conditions, the maximum interval of existence can be infinite, that is, $t_m = \infty$.

For each $m$ we consider the following projection operator $P_m : V^{*} \rightarrow V^{*}$
\begin{align*}
P_m u = \sum^{m}_{i = 0}\left\langle \psi_i, u\right\rangle_{V\times V^{*}}\psi_i,\;\;\; u \in V^{*},
\end{align*}
which is a linear and bounded operator, in fact, given that, $P_m(V^{*}) \subset V$, we can consider $P_m$ as an operator from $V$ into $V$, given by
\begin{align*}
P_m v = \sum^{m}_{i = 0}\left(v,\psi_i\right)\psi_i,\;\;\; v \in V,
\end{align*}
then, it is possible to prove that $$\left\|P_m\right\|_{\mathcal{L}(V)} \leq 1 + \frac{\mathcal{M}}{\alpha},$$ and taking into account that the transpose $P^{T}_m$ of $\left.P_m\right|_{V}$ can be identified with $P_m: V^{*} \rightarrow V^{*}$, one gets $\left\|P_m\right\|_{\mathcal{L}(V)} = \left\|P_m\right\|_{\mathcal{L}(V^{*})}$, so $$\left\|P_m\right\|_{\mathcal{L}(V^{*})} \leq 1 + \frac{\mathcal{M}}{\alpha},$$ the proof of these results can be found in \cite{Yves}.
\begin{remark}
As it was mentioned before, our main goal is to establish the existence of a periodic solution of the monodomain model in an isolated ventricle, assuming periodic activation of its endocardium. The proof is based on three main steps. The first step is inspired in an idea developed by M. Farkas in Chapter 4 of \cite{Far}. In his book, M. Farkas considers quasilinear ODEs systems whose linear and non-linear parts are $T$-periodic, and he introduces a nonlinear operator between certain spaces of continuous functions associated with the system under study, in our case the Faedo-Galerkin systems. Afterward, it is shown that the quasilinear system has a non-trivial $T$-periodic solution if and only if the corresponding operator has a fixed point (see Theorem 4.1.3, \cite{Far}). Based on this scheme, we will build an M. Farkas operator for each Faedo-Galerkin system \eqref{FGsys}, and we apply the Farkas result.

The second step consists of showing the existence of a $T$-periodic solution for each Faedo-Galerkin system \eqref{FGsys}, which defines a sequence of $T$-periodic functions. Taking into account the first step, what we do is to prove the existence of a fixed point for the Farkas operator associated with the Faedo-Galerkin system, through the fixed point Schauder theorem. Additionally, it is seen that this sequence of $T$-periodic solutions is uniformly bounded in the norm of $H$.

In the third step, we use a theorem of \cite{nues} which is, in some way, a generalization of a central result of \cite{Yves}. Our result in this part says that, if the initial conditions \eqref{FGintc} of the Faedo-Galerkin systems \eqref{FGsys} are uniformly bounded in $H$, then the sequence of solutions of the Cauchy problems \eqref{FGsys}-\eqref{FGintc} possess a subsequence which weakly converges to a global weak solution of \eqref{ecCn}-\eqref{condFPn}. Finally, in order to prove the existence of $T$-periodic weak solution, we apply our result to the uniformly bounded sequence of $T$-periodic solutions of \eqref{FGsys}, obtained in the second step.
\end{remark} 
\subsection{The Farkas operator $\mathcal{K}_m$ associated to the Faedo-Galerkin system and the fixed point type result}\label{sect1}
In this section, we develop the two first steps. With this purpose, we follow Chapter 4 of \cite{Far} devoted to the study of the existence of $T$-periodic solutions of systems of $n$ equations of the type
\begin{align}\label{auxproblem}
U' = M(t)U + F(t,U),
\end{align}
where $M \in C(\RR;\RR^{n^2})$, $F \in C^1(\RR\times \RR^n;\RR^n)$ are $T$-periodic in $t$ and $U \in C(\RR,\RR^{n})$ denotes the vector solution. In \cite{Far}, an operator is defined under the next condition: the linear problem 
\begin{align}\label{linearsys1}
U' = M(t)U,
\end{align}
has no $T$-periodic solution apart from the trivial one. To do this, the following auxiliary problem is introduced:
\begin{align}\label{auxproblem1}
U' = M(t)U + b(t),
\end{align}
where $b \in C(\RR;\RR^{n})$ is $T$-periodic. The system \eqref{auxproblem1} has only a $T$-periodic solution given by the expression
\begin{align*}
U^{p}(t) = \int^T_0K(t,\tau)b(\tau)d\tau,
\end{align*}
where $K(t,\tau)$ is the Green matrix function given by
\begin{align*}
K(t,\tau) = \left\{
\begin{array}{cc}
\Phi(t)(I - \Phi(T))^{-1}\Phi(\tau), &\;\;\; \hbox{ for $0 \leq \tau \leq t \leq T$},\\
\Phi(t + T)(I - \Phi(T))^{-1}\Phi(\tau), &\;\;\; \hbox{ for $0 \leq t \leq \tau \leq T$},
\end{array}
\right.
\end{align*}
where $\Phi(t)$ is the fundamental matrix solution of the linear system \eqref{linearsys1} and $I$ is the identity matrix. This expression defines an operator on the subspace of $T$-periodic functions $b(t) \in C(\RR; \RR^{n})$, in such a way that to each $b$ it is associated the only $T$-periodic solution of the linear system \eqref{auxproblem1}. Finally, the operator $\mathcal{K}$ on the space of $T$-periodic functions $U(t) \in C(\RR; \RR^{n})$ is defined, which assigns the only periodic solution of \eqref{auxproblem1}, $U(t)$, when $b(t)$ is replaced by $F$, that is, 
\begin{align}\label{Faroper}
\mathcal{K}(U) = \int^T_0K(t,\tau)F(\tau,U(\tau))d\tau,
\end{align}
and observe that the fixed points of $\mathcal{K}$ are $T$-periodic solutions of \eqref{auxproblem}.

Returning to the Faedo-Galerkin system, note that it can be written in the following form:
\begin{align}\label{FGsys2}
\begin{array}{c}
u^{(m)'}_i =  - \lambda_{i} u^{(m)}_i 
- \left\langle \psi_i, f(u_m,w_m)\right\rangle_{V\times V^{*}} + \widetilde{s}(t)\left\langle \psi_i,\varphi^{*}\right\rangle_{V\times V^{*}},\\\\
w^{(m)'}_i = \epsilon b(u^{(m)}_i - \xi c_3w^{(m)}_i),
\end{array}\;\;\; i = 0,\ldots,m,
\end{align}
in which $u_m \in V, w_m \in H$ and thus $f(u_m,w_m) \in L^{p'}(\Omega) \subset V^{*}$, see Remark \ref{com1}. The system \eqref{FGsys2} can be formulated in the following matrix form
\begin{align}\label{FGmat}
U_m' = L_m U_m + F(U_m) + s(t)B_0,
\end{align}
where $$U_m(t) = \left(u^{(m)}_0(t), w^{(m)}_0(t), u^{(m)}_1(t),w^{(m)}_1(t),\ldots, u^{(m)}_m(t), w^{(m)}_m(t)\right) \in \RR^{2m + 2},$$
for all $t \geq 0$, and $L$ is a block diagonal matrix, whose blocks have the form
\begin{align*}
L_i = \left(
\begin{array}{cc}
-\lambda_i & 0\\
0 & -bc_3\epsilon\xi
\end{array}
\right),\;\;\; i = 0,\cdots,m,
\end{align*}
$F(U_m)$ is a vector field with components
\begin{align*}
F(U_m) = \left(-\left\langle \psi_0, f(u_m,w_m)\right\rangle_{V\times V^{*}},\epsilon bu_0,\ldots,-\left\langle \psi_m, f(u_m,w_m)\right\rangle_{V\times V^{*}}, \epsilon bu_m\right),
\end{align*}
and $B_0$ is the following vector
\begin{align*}
B_0 = \left(\left\langle \psi_0, \varphi^{*}\right\rangle_{V\times V^{*}},0,\ldots,\left\langle \psi_m, \varphi^{*}\right\rangle_{V\times V^{*}},0\right).
\end{align*} 
This system \eqref{FGmat} is a quasilinear one, like those considered in \eqref{auxproblem}. In our case, the matrix $M(t)=L_m$ is constant, so the corresponding fundamental matrix solution takes the form
\begin{align*}
\Phi(t) = e^{tL_m}, 
\end{align*} 
which is block diagonal matrix, with blocks of the form 
\begin{align*}
\Phi_i(t) = \left(
\begin{array}{cc}
e^{-\lambda_i t} & 0\\
0 & e^{-bc_3\epsilon \xi t}
\end{array}
\right),\;\;\; \hbox{for $i = 0,\ldots,m$}.
\end{align*}
The Green matrix function $K(t,\tau)$ is also a block diagonal matrix, whose blocks are
\begin{align*}
\left(
\begin{array}{cc}
K_i(t,\tau) & 0\\
0 & K_b(t,\tau)
\end{array}
\right),\;\;\; i = 0,\ldots,m,
\end{align*}
where 
\begin{align}\label{Ki}
K_i(t,\tau) = \left(1 - e^{-\lambda_iT}\right)^{-1}
\left\{
\begin{array}{cc}
e^{-\lambda_i(t - \tau)} & \hbox{if $0 \leq \tau \leq t \leq T,$}\\
e^{-\lambda_i(t + T - \tau)} & \hbox{if $0 \leq t < \tau \leq T,$}
\end{array}\;\;\; \hbox{for $i = 0,\ldots,m$,}
\right.\;\;\; 
\end{align}
and
\begin{align*}
K_b(t,\tau) = \left(1 - e^{-bc_3\epsilon \xi T}\right)^{-1}
\left\{
\begin{array}{cc}
e^{-bc_3\epsilon \xi(t - \tau)} & \hbox{if $0 \leq \tau \leq t \leq T,$}\\
e^{-bc_3\epsilon \xi(t + T - \tau)} & \hbox{if $0 \leq t < \tau \leq T.$}
\end{array}
\right.
\end{align*}
In this point, we introduce the operator $\mathcal{K}_m$ following the previously discussed ideas, and extensively explained in \cite{Far}. We consider the set of functions $U \in C(\RR;\RR^{2m+2})$, such that $U(t)$ has the form 
\begin{align}\label{Ut}
U(t) = (u_0(t),w_0(t),u_1(t),w_1(t),\ldots,u_m(t),w_m(t)).
\end{align}
We define the space
\begin{align*}
C_T(\RR;\RR^{2m+2}) = 
\left\{U \in C(\RR;\RR^{2m+2}): u_i(t + T) = u_i(t), w_i(t + T) = w_i(t), i = \overline{1m},\right\},
\end{align*}
equipped with the norm 
\begin{align}\label{norm1}
\nonumber
\left\|U\right\|_{C_T(\RR;\RR^{2m+2})} &= \sup_{t \in [0,T]}\sqrt{\sum^{m}_{i = 0}\left(u^{(m)}_i(t)\right)^2 + \left(w^{(m)}_i(t)\right)^2} \\
&= \sup_{t \in [0,T]}\sqrt{\left\|u_m(t)\right\|^2_{H} + \left\|w_m(t)\right\|^2_{H}}.
\end{align}
Simultaneously, it is useful to introduce the normed space
\begin{align*}
C_T(\RR;V_m\times H_m) = \left\{(u_m,w_m) \in C(\RR;V_m\times H_m): u_m(t + T) = u_m(t), w_m(t + T) = w_m(t)\right\}
\end{align*}
whose norm is
\begin{align*}
\left\|(u_m,w_m)\right\|_{C_T(\RR;V_m\times H_m)} = \sup_{t \in [0,T]}\sqrt{\left\|u_m(t)\right\|^2_{V} + \left\|w_m(t)\right\|^2_{H}}.
\end{align*}
One can see that there is a continuous isomorphism between the Banach spaces $C_T(\RR;\RR^{2m+2})$ and $C_T(\RR;V_m\times H_m)$, which is defined in the following way: the pair $(u_m, w_m)$ corresponds to each $U$ given by \eqref{resol1}. Next, we will construct the Farkas operator.

We associate the following vector to each $U \in C_T(\RR;\RR^{2m+2})$, we associate the following vector, which depends on $t$ 
\begin{align}\label{Km1}
\left(\mathcal{A}_0\left(U\right)(t),\mathcal{B}_0\left(U\right)(t),\ldots,\mathcal{A}_{m}\left(U\right)(t),\mathcal{B}_m\left(U\right)(t)\right),
\end{align}
where
\begin{align}\label{inteRela1}
\mathcal{A}_i\left(U\right)(t) = \int^T_{0}K_i(t,\tau)\left\langle \psi_i, f(u_m(\tau),w_m(\tau)) + \widetilde{s}(\tau)\varphi^{*}\right\rangle_{V\times V^{*}}d\tau,
\end{align}
and 
\begin{align}\label{inteRela2}
\mathcal{B}_{i}\left(U\right)(t) = \int^T_{0}K_b(t,\tau)u^{(m)}_{i}(\tau)d\tau \;\; \hbox{with $i = 0,\ldots,m,$}
\end{align}
note moreover that operator \eqref{Faroper} for the system \eqref{FGmat} is 
\begin{align*}
\mathcal{K}_m(U) = \left(\mathcal{A}_0\left(U\right)(t),\mathcal{B}_0\left(U\right)(t),\ldots,\mathcal{A}_{m}\left(U\right)(t),\mathcal{B}_m\left(U\right)(t)\right).
\end{align*}
It is easy to see that $\mathcal{K}_m: C_T(\RR;\RR^{2m+2}) \rightarrow C_T(\RR;\RR^{2m+2}).$ Furthermore, if $\mathcal{K}_m$ has a fixed point, then the Faedo-Galerkin system \eqref{FGsys2} has a $T$-periodic solution, see Theorem 4.1.3 in \cite{Far}. 

In what follows, we define
\begin{align}\label{setE}
B^{(m)}_R = \left\{U \in C_T(\RR;\RR^{2m+2}): \sup_{t \in [0,T]}\sqrt{\left\|u_m(t)\right\|^2_{V} + \left\|w_m(t)\right\|^2_{H}} \leq R\right\},
\end{align}
and now, we will look for a fixed point of $\mathcal{K}_m$ into $B^{(m)}_R$. Note that, if $U \in B_R^{(m)}$, the associated pair $(u_m(t),w_m(t))$ satisfies $$\left\|u_m(0)\right\|^2_{H} + \left\|w_m(0)\right\|^2_{H} \leq R_1,$$ for certain constant $R_1 > 0$. Thus, if $U^{(p)}_m \in  B^{(m)}_R$ is a fixed point of $\mathcal{K}_m$, then it is a $T$-periodic solution of the Faedo-Galerkin system \eqref{FGsys2} with initial conditions \eqref{FGintc1} uniformly bounded.

Now, we see that $B^{(m)}_R$ and $\mathcal{K}_m$ satisfy the hypothesis of fixed point theorem of Schauder. We enunciate following result. 
\begin{proposition}\label{TheoPe}
If the parameters of the monodomain model and the constant $R > 0$ satisfy the following relations
\begin{align}\label{ResEq}
\frac{T}{1 - e^{-c_4T}} \leq \frac{\sqrt{2}R}{2\left(1 + \frac{\mathcal{M}}{\alpha}\right)\left(\mathcal{M}_1(R) + \hat{s}\mathcal{N}_{\mathcal{T}}\left\|\varphi\right\|_{L^2(\Gamma_1)}\right)},
\end{align}
\begin{align}\label{ResEq1}
\hbox{and}\;\;\; \xi c_3 \geq \sqrt{2},
\end{align}
where 
\begin{align*}
\mathcal{M}_1(R) = K_1\left(A_1|\Omega|^{3/4} + A_2K_2R^{3} 
+ A_3R^{3/2}\right),
\end{align*}
$A_1, A_2, A_3$ are the constants introduced in Remark \ref{com1}, and $\hat{s} = \sup_{t \in [0,T]}\left|\widetilde{s}(t)\right|.$ Then, $B^{(m)}_R$ is invariant by $\mathcal{K}_m$, that is, $\mathcal{K}_{m}(B^{(m)}_R) \subseteq B^{(m)}_R$. 
\end{proposition}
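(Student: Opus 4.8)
Throughout, fix $U \in B_R^{(m)}$ and denote by $(u_m,w_m)$ the pair associated to it through \eqref{resol1}, so that $\|u_m(t)\|_V \le R$ and $\|w_m(t)\|_H \le R$ for all $t \in [0,T]$. Under the isomorphism between $C_T(\RR;\RR^{2m+2})$ and $C_T(\RR;V_m\times H_m)$, the element $\mathcal{K}_m(U)$ corresponds to the pair $(\widetilde{u}_m,\widetilde{w}_m) = \bigl(\sum_{i=0}^m \mathcal{A}_i(U)(\cdot)\psi_i,\ \sum_{i=0}^m \mathcal{B}_i(U)(\cdot)\psi_i\bigr)$, which is the unique $T$-periodic solution of the matrix system \eqref{FGmat} with the quasilinear term frozen along $U$. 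Since $\|\widetilde{u}_m(t)\|_V^2 = \sum_{i=0}^m \lambda_i\,\mathcal{A}_i(U)(t)^2$ and $\|\widetilde{w}_m(t)\|_H^2 = \sum_{i=0}^m \mathcal{B}_i(U)(t)^2$, it is enough to prove that $\|\widetilde{u}_m(t)\|_V \le R/\sqrt{2}$ and $\|\widetilde{w}_m(t)\|_H \le R/\sqrt{2}$ for every $t \in [0,T]$; summing the squares then gives $\|\mathcal{K}_m(U)(t)\|^2 = \|\widetilde{u}_m(t)\|_V^2 + \|\widetilde{w}_m(t)\|_H^2 \le R^2$, i.e. $\mathcal{K}_m(U) \in B_R^{(m)}$.

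The first ingredient is a uniform bound on the frozen forcing. By Remark~\ref{com1}, for every $\tau$ one has $\|f(u_m(\tau),w_m(\tau))\|_{L^{p'}(\Omega)} \le A_1|\Omega|^{3/4} + A_2\|u_m(\tau)\|_{L^p}^{3} + A_3\|w_m(\tau)\|_H^{3/2}$ with $p = 4$; inserting the Sobolev embeddings $V \hookrightarrow L^4(\Omega)$ and $L^{4/3}(\Omega)\hookrightarrow V^{*}$ and using $U \in B_R^{(m)}$ yields $\|f(u_m(\tau),w_m(\tau))\|_{V^{*}} \le \mathcal{M}_1(R)$, uniformly in $\tau$, while $\|\widetilde{s}(\tau)\varphi^{*}\|_{V^{*}} \le \hat{s}\,\mathcal{N}_{\mathcal{T}}\|\varphi\|_{L^2(\Gamma_1)}$. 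Next, $\mathcal{A}_i(U)(t)$ in \eqref{inteRela1} is the convolution over $[0,T]$ of $\langle \psi_i, f(u_m(\cdot),w_m(\cdot)) + \widetilde{s}(\cdot)\varphi^{*}\rangle$ against the diagonal Green kernel $K_i$ of \eqref{Ki}. Since $0 < \lambda_0 \le \lambda_i$ and $\lambda_0 = c_4$ is the smallest eigenvalue of $A$, we have $|K_i(t,\tau)| \le (1 - e^{-\lambda_i T})^{-1} \le (1 - e^{-c_4 T})^{-1}$ for all $i$, $t$, $\tau$, so that on $V_m$ the operator $\sum_i K_i(t,\tau)\langle\psi_i,\cdot\rangle\psi_i$ is dominated by $(1 - e^{-c_4 T})^{-1}$ uniformly in $m$ and $(t,\tau)$; integrating over $\tau \in [0,T]$ supplies the factor $T$. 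Combining this with the forcing bound above and with $\|P_m\|_{\mathcal{L}(V^{*})} \le 1 + \mathcal{M}/\alpha$ gives
\[
\sup_{t\in[0,T]} \|\widetilde{u}_m(t)\|_V \ \le\ \frac{T}{1 - e^{-c_4 T}}\Bigl(1 + \frac{\mathcal{M}}{\alpha}\Bigr)\Bigl(\mathcal{M}_1(R) + \hat{s}\,\mathcal{N}_{\mathcal{T}}\|\varphi\|_{L^2(\Gamma_1)}\Bigr),
\]
and the hypothesis \eqref{ResEq} makes the right-hand side $\le \sqrt{2}R/2 = R/\sqrt{2}$, which is the bound claimed for $\widetilde{u}_m$.

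The bound on $\widetilde{w}_m$ is more elementary. The $w$-block of \eqref{FGsys2} is the scalar linear equation $w_i' = -b c_3 \epsilon\xi\, w_i + \epsilon b\, u_i$, whose unique $T$-periodic solution is $\mathcal{B}_i(U)(t) = \epsilon b\int_0^T K_b(t,\tau)\,u_i^{(m)}(\tau)\,d\tau$. By Minkowski's integral inequality, $\|\widetilde{w}_m(t)\|_H \le \epsilon b\int_0^T |K_b(t,\tau)|\,\|u_m(\tau)\|_H\,d\tau$, and a direct computation of the period integral of the kernel gives $\int_0^T |K_b(t,\tau)|\,d\tau = (b c_3 \epsilon\xi)^{-1}$, so that $\|\widetilde{w}_m(t)\|_H \le (\xi c_3)^{-1}\sup_{\tau}\|u_m(\tau)\|_H \le R/(\xi c_3)$ because $U \in B_R^{(m)}$. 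Hypothesis \eqref{ResEq1}, namely $\xi c_3 \ge \sqrt{2}$, then forces $\|\widetilde{w}_m(t)\|_H \le R/\sqrt{2}$. Together with the previous paragraph this yields $\|\mathcal{K}_m(U)(t)\|^2 \le R^2/2 + R^2/2 = R^2$ for all $t$, i.e. $B_R^{(m)}$ is invariant under $\mathcal{K}_m$, which is the assertion of the proposition.

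The step I expect to be the real obstacle is the estimate for $\widetilde{u}_m$: one needs a bound in the strong norm $\|\cdot\|_V$ that is uniform in the Galerkin level $m$, starting only from a $V^{*}$-bound on the (singular, boundary-supported) forcing $\widetilde{s}\varphi^{*}$ together with the $L^{4/3}$-bound on the reaction term. The delicate point is to extract precisely the constant $(1 - e^{-c_4 T})^{-1}$ from the spectrum of $(I - e^{-TA})^{-1}$ while keeping the $\tau$-integration contribution linear in $T$, so that the resulting estimate couples directly to the left-hand side of \eqref{ResEq}; the explicit form of the diagonal kernel $K_i$ in \eqref{Ki} and the contraction property of the family $e^{-tA}$ are exactly what make this feasible. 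By contrast, the bound on $\widetilde{w}_m$ and the bookkeeping with $P_m$ and the Sobolev embedding constants are routine.
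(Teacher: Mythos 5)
Your argument is correct and follows essentially the same route as the paper: the same splitting of $\mathcal{K}_m(U)$ into its $u$- and $w$-components with the sufficient bounds $R/\sqrt{2}$ for each, the same $V^{*}$-estimate $\mathcal{M}_1(R)+\hat{s}\mathcal{N}_{\mathcal{T}}\left\|\varphi\right\|_{L^2(\Gamma_1)}$ on the frozen forcing via Remark \ref{com1} and the Sobolev embeddings, the same spectral bound $(1-e^{-\lambda_0 T})^{-1}$ on the Green kernel together with the factor $T$ and $\left\|P_m\right\|_{\mathcal{L}(V^{*})}\leq 1+\mathcal{M}/\alpha$ (the paper packages this as $(I_d-e^{-TA})^{-1}e^{-sA}P_m$ and invokes Bochner's inequality, while you work coordinatewise, but the computation is identical), and the same evaluation $\int_0^T K_b\,d\tau=(bc_3\epsilon\xi)^{-1}$ for the $w$-block. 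The only discrepancy is inherited from the paper itself: its proof identifies $\lambda_0=\epsilon c_4/C$ and derives the condition with $1-e^{-\frac{\epsilon c_4}{C}T}$, whereas the proposition's left-hand side (and your write-up) uses $1-e^{-c_4T}$.
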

\begin{proof}
Suppose that $U \in B^{(m)}_R$. Obviously, $\mathcal{K}_m(U) \in C_T(\RR;\RR^{2m+2})$. Now, in order to prove $\mathcal{K}_m(B_R^{(m)})\subseteq B_R^{(m)}$, it is enough to see that
\begin{align*}
\left\|\mathcal{L}_1(u_m,w_m)\right\|^2_V \leq \frac{R^2}{2},\;\;\; \hbox{and}\;\;\; \left\|\mathcal{L}_2(u_m,w_m)\right\|^2_H \leq \frac{R^2}{2}.
\end{align*}
where
\begin{align}\label{L1}
&\mathcal{L}_1(u_m,w_m)(t,\mathbf{x}) = \int^{T}_0\sum^{m}_{i = 0}K_i(t,\tau)\left\langle \psi_i,f(u_{m}(\tau),w_{m}(\tau)) + \widetilde{s}(\tau)\varphi^{*}\right\rangle_{V\times V^{*}}\psi_i(\mathbf{x})d\tau,
\end{align}
and 
\begin{align}\label{L2}
\mathcal{L}_2(u_m,w_m)(t,\mathbf{x}) = \int^{T}_0K_b(t,\tau)u_m(\tau,\mathbf{x})d\tau.
\end{align}
Due to the form of $K_i(t,\tau)$, see \eqref{Ki}, the expression in \eqref{L1} can be written as follows
\begin{align*}
&\mathcal{L}_1(u_m,w_m)(t,\mathbf{x}) = \\
& \int^{t}_0\sum^{m}_{i = 0}(1-e^{-\lambda_i T})^{-1}e^{-\lambda_i(t-\tau)}\left\langle \psi_i,f(u_{m}(\tau),w_{m}(\tau)) + \widetilde{s}(\tau)\varphi^{*}\right\rangle_{V\times V^{*}}\psi_i(\mathbf{x})d\tau + \\
&\int^{T}_t\sum^{m}_{i = 0}(1-e^{-\lambda_i T})^{-1}e^{-\lambda_i(t + T -\tau)}\left\langle \psi_i,f(u_{m}(\tau),w_{m}(\tau)) + \widetilde{s}(\tau)\varphi^{*}\right\rangle_{V\times V^{*}}\psi_i(\mathbf{x})d\tau.
\end{align*}
On the other hand, it is possible to see that 
\begin{align*}
&\sum^{m}_{i = 0}(1-e^{-\lambda_i T})^{-1}e^{-\lambda_i(t-\tau)}\left\langle \psi_i,f(u_{m}(\tau),w_{m}(\tau)) + \widetilde{s}(\tau)\varphi^{*}\right\rangle_{V\times V^{*}}\psi_i = \\
&\left(I_d - e^{-TA}\right)^{-1}e^{-(t - \tau)A}P_m\left(f(u_m(\tau),w_m(\tau)) + \widetilde{s}(\tau)\varphi^{*}\right),
\end{align*}
for all $t > 0$. Therefore
\begin{align*}
&\mathcal{L}_1(u_m,w_m)(t) = \int^{t}_0\left(I_d - e^{-TA}\right)^{-1}e^{-(t - \tau)A}P_m\left(f(u_m(\tau),w_m(\tau)) + \widetilde{s}(\tau)\varphi^{*}\right)d\tau + \\
&\int^{T}_t\left(I_d - e^{-TA}\right)^{-1}e^{-(t + T - \tau)A}P_m\left(f(u_m(\tau),w_m(\tau)) + \widetilde{s}(\tau)\varphi^{*}\right)d\tau.
\end{align*}

Let us estimate $\left\|\mathcal{L}_1(u_m,w_m)(t)\right\|_{V}$. From the Bochner theorem (see \cite{Evans}), we have 
\begin{align*}
&\left\|\mathcal{L}_1(u_m,w_m)(t)\right\|_{V} \leq \\
&\leq \int^{t}_0\left\|\left(I_d - e^{-TA}\right)^{-1}e^{-(t - \tau)A}P_m\left(f(u_m(\tau),w_m(\tau)) + \widetilde{s}(\tau)\varphi^{*}\right)\right\|_{V}d\tau + \\
&\int^{T}_t\left\|\left(I_d - e^{-TA}\right)^{-1}e^{-(t + T - \tau)A}P_m\left(f(u_m(\tau),w_m(\tau)) + \widetilde{s}(\tau)\varphi^{*}\right)\right\|_{V}d\tau.
\end{align*}
Taking into account that
\begin{align*}
&\left\|\left(I_d - e^{-TA}\right)^{-1}e^{-(t - \tau)A}P_m\left(f(u_m(\tau),w_m(\tau)) + \widetilde{s}(\tau)\varphi^{*}\right)\right\|_{V} \leq \\
& \left(1 + \frac{\mathcal{M}}{\alpha}\right)\left\|\left(I_d - e^{-TA}\right)^{-1}\right\|_{\mathcal{L}(H)}\left\|f(u_m(\tau),w_m(\tau)) + \widetilde{s}(\tau)\varphi^{*}\right\|_{V^{*}} \leq\\
& \left(1 + \frac{\mathcal{M}}{\alpha}\right)\left(1 - e^{-\lambda_0T}\right)^{-1}\left\|f(u_m(\tau),w_m(\tau)) + \widetilde{s}(\tau)\varphi^{*}\right\|_{V^{*}},
\end{align*}
for all $t > 0$, we only need to estimate $\left\|f(u_m(\tau),w_m(\tau)) + \widetilde{s}(\tau)\varphi^{*}\right\|_{V^{*}}.$ First, observe that
\begin{align*}
\left\|\widetilde{s}(\tau)\varphi^{*}\right\|_{V^{*}} \leq \hat{s}\mathcal{N}_{\mathcal{T}}\left\|\varphi\right\|_{L^2(\Gamma_1)},
\end{align*}
where we recall that $\hat{s} = \sup_{t \in [0,T]}|\widetilde{s}(t)|$. Also, we have
\begin{align*}
&\left\|f(u_m(\tau),w_m(\tau))\right\|_{V^{*}} \leq K_1 \left\|f(u_m(\tau),w_m(\tau))\right\|_{L^{p'}(\Omega)}\\
& \leq K_1\left(A_1|\Omega|^{1/p'} + A_2\|u_m\|^{p/p'}_{L^p} 
+ A_3\|w_m\|^{2/p'}_{H}\right),
\end{align*}
see Remark \ref{com1}. Here, $K_1$ is the immersion constant from $L^{p'}(\Omega)$ into $V^{*}$. Since, $U \in B^{(m)}_R$ then, the respective pair $(u_m, w_m)$ satisfies 
\begin{align*}
\left\|u_m(t)\right\|_V \leq R,\;\;\; \hbox{and}\;\;\; \left\|w_m(t)\right\|_H \leq R,\;\;\; \hbox{for all $t \in [0,T]$,}
\end{align*}
so, from Remark \ref{com1} it is deduced that $f^N(u_m(t),w_m(t)) \in L^{p'}(\Omega) \subset V^{*}$ and
\begin{align*}
&\left\|f(u_m(t),w_m(t))\right\|_{V^{*}} \leq K_1 \left\|f(u_m(\tau),w_m(\tau))\right\|_{L^{p'}(\Omega)}\\
& \leq K_1\left(A_1|\Omega|^{1/p'} + A_2K_2R^{p/p'} 
+ A_3R^{2/p'}\right),
\end{align*}
where $K_2$ is the immersion constant from $V$ into $L^{p}(\Omega)$. Since, $p = 4$ we obtain the following inequality
\begin{align*}
&\left\|f(u_m(t),w_m(t))\right\|_{V^{*}} \leq K_1\left(A_1|\Omega|^{3/4} + A_2K_2R^{3} 
+ A_3R^{3/2}\right).
\end{align*}

Finally, we arrive at the following estimate 
\begin{align*}
&\left\|\left(I_d - e^{-TA}\right)^{-1}e^{-(t - \tau)A}P_m\left(f(u_m(\tau),w_m(\tau)) + \widetilde{s}(\tau)\varphi^{*}\right)\right\|_{V} \leq \\
& \left(1 + \frac{\mathcal{M}}{\alpha}\right)\left(1 - e^{-\lambda_0T}\right)^{-1}\left(\mathcal{M}_1(R) + \hat{s}\left\|\varphi\right\|_{L^2(\Gamma_1)}\right),
\end{align*}
that is
\begin{align*}
\left\|\mathcal{L}_1(u_m,w_m)(t)\right\|_{V} \leq \left(1 + \frac{\mathcal{M}}{\alpha}\right)\left(1 - e^{-\lambda_0T}\right)^{-1}\left(\mathcal{M}_1 + \hat{s}\left\|\varphi\right\|_{L^2(\Gamma_1)}\right)T,
\end{align*}
for all $t \in [0,T]$ where $\lambda_0 = \frac{\epsilon c_4}{C}$. If the parameters of the monodomain model satisfy
\begin{align}\label{restri1}
\left(1 + \frac{\mathcal{M}}{\alpha}\right)\left(1 - e^{-\frac{\epsilon c_4}{C}T}\right)^{-1}\left(\mathcal{M}_1(R) + \hat{s}\mathcal{N}_{\mathcal{T}}\left\|\varphi\right\|_{L^2(\Gamma_1)}\right)T \leq \frac{\sqrt{2}}{2}R,
\end{align}
then, we obtain
\begin{align*}
\left\|\mathcal{L}_1(u_m,w_m)(t)\right\|^2_{V} \leq \frac{R^2}{2}.
\end{align*}
Note that the inequality \eqref{restri1} is equivalent to the following
\begin{align*}
\frac{T}{1 - e^{-\frac{\epsilon c_4}{C} T}} \leq \frac{\sqrt{2}R}{2\left(1 + \frac{\mathcal{M}}{\alpha}\right)\left(\mathcal{M}_1 + \hat{s}\mathcal{N}_{\mathcal{T}}\left\|\varphi\right\|_{L^2(\Gamma_1)}\right)}.
\end{align*}
On other hand, we have
\begin{align*}
&\left\|\mathcal{L}_2(u_m,w_m)\right\|_H \leq \epsilon b\int^{T}_0K_b(t,\tau)\left\|u_m(\tau)\right\|_{H}d\tau\\
&\leq \epsilon b\int^{T}_0K_b(t,\tau)\left\|u_m(\tau)\right\|_{V}d\tau \leq R\int^{T}_0K_b(t,\tau)d\tau = \frac{R}{\xi c_3}, \;\;\; \hbox{for all $t \in [0,T]$,}
\end{align*}
where we have used the fact that
\begin{align*}
\int^{T}_0K_b(t,\tau)d\tau = \frac{1}{bc_3 \xi \epsilon}.
\end{align*}
Hence, from \eqref{ResEq1} follows 
\begin{align*}
\left\|\mathcal{L}_2(u_m,w_m)\right\|^2_{H} \leq \frac{R^2}{2}.
\end{align*}
So, we have proved that if \eqref{ResEq} and \eqref{ResEq1} are assumed, $U \in B^{(m)}_R$ implies that the associated pair $(u_m,w_m)$ satisfies
\begin{align*}
\sup_{t \in [0,T]}\sqrt{\left\|\mathcal{L}_1(u_m,w_m)(t)\right\|_V + \left\|\mathcal{L}_2(u_m,w_m)(t)\right\|_H} \leq R,
\end{align*}
and thus, $\mathcal{K}_m(B^{(m)}_R) \subseteq B^{(m)}_R.$
\end{proof}

It is easy to see that $B^{(m)}_R$ is a convex, bounded and closed subset of $C_T(\RR;\RR^{2m+1})$. Now, it only remains to be proved that $\mathcal{K}_m$ is a compact operator.
\begin{proposition}
The operator $\mathcal{K}_m: C_T(\RR;\RR^{2m+1}) \rightarrow C_T(\RR;\RR^{2m+1})$ is a compact one. 
\end{proposition}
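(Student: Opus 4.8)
The plan is to deduce compactness of $\mathcal{K}_m$ from the Arzelà--Ascoli theorem. By definition, $\mathcal{K}_m$ is a compact operator once it is continuous and maps bounded sets onto relatively compact sets; and since, by $T$-periodicity, $C_T(\RR;\RR^{2m+2})$ is isometrically a closed subspace of $C([0,T];\RR^{2m+2})$, relative compactness of a subset there is equivalent to uniform boundedness together with equicontinuity of the corresponding family of functions on $[0,T]$. So I would fix $\rho>0$, consider the ball $\{U\in C_T(\RR;\RR^{2m+2}):\|U\|_{C_T}\le\rho\}$, and establish: (i) continuity of $\mathcal{K}_m$, (ii) uniform boundedness of $\mathcal{K}_m$ on that ball, and (iii) equicontinuity of the image of that ball.

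For (i) and (ii): since $u_m(\tau)=\sum_{i=0}^m u_i^{(m)}(\tau)\psi_i$ and $w_m(\tau)=\sum_{i=0}^m w_i^{(m)}(\tau)\psi_i$ depend linearly on the coefficient vector, and $f$ is a polynomial (hence locally Lipschitz) function of its two real arguments, the maps $\tau\mapsto\langle\psi_i,f(u_m(\tau),w_m(\tau))\rangle_{V\times V^{*}}=\int_\Omega\psi_i\,f(u_m(\tau),w_m(\tau))$ are continuous and depend continuously on $U$, uniformly over bounded sets; this is the usual continuity of the Nemytskii operator, made elementary here by the finite dimensionality of $V_m$ and the growth bounds of Remark \ref{com1}. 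Because the Green kernels $K_i,K_b$ are bounded on $[0,T]^2$ (indeed $\|K_i\|_\infty\le(1-e^{-\lambda_0T})^{-1}$ there), the integral operators \eqref{inteRela1}--\eqref{inteRela2} inherit continuity, giving (i). For (ii), when $\|U\|_{C_T}\le\rho$ the estimates of Remark \ref{com1} (used exactly as in the proof of Proposition \ref{TheoPe}) give $|\langle\psi_i,f(u_m(\tau),w_m(\tau))\rangle_{V\times V^{*}}|\le\|\psi_i\|_{L^p}\|f(u_m(\tau),w_m(\tau))\|_{L^{p'}(\Omega)}\le C(\rho,m)$ for all $\tau$, while $|u_i^{(m)}(\tau)|\le\rho$ and $\widetilde s\in L^\infty$ by \textbf{(h3)}; combining with the kernel bounds gives $\|\mathcal{K}_m(U)\|_{C_T}\le C'(\rho,m,T)$.

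The core of the argument is (iii). Here I would use the defining property of the Green matrix $K$: the component $t\mapsto\mathcal{A}_i(U)(t)$ is the unique $T$-periodic solution of a scalar linear inhomogeneous ODE $y'=-\lambda_i y+h_i(t)$ whose forcing $h_i$ is built from $\langle\psi_i,f(u_m,w_m)\rangle_{V\times V^{*}}$ and $\widetilde s\langle\psi_i,\varphi^{*}\rangle_{V\times V^{*}}$, and $t\mapsto\mathcal{B}_i(U)(t)$ is the unique $T$-periodic solution of $y'=-bc_3\epsilon\xi\,y+(\mathrm{const})\,u_i^{(m)}(t)$. In particular these functions are $C^1$ on $[0,T]$, with $\mathcal{A}_i(U)'=-\lambda_i\mathcal{A}_i(U)+h_i$ and the analogous identity for $\mathcal{B}_i$; by step (ii) the quantities $|\mathcal{A}_i(U)(t)|$, $|h_i(t)|$, $|\mathcal{B}_i(U)(t)|$ and $|u_i^{(m)}(t)|$ are all bounded by a constant depending only on $\rho,m,T$, so $|\mathcal{A}_i(U)'(t)|,|\mathcal{B}_i(U)'(t)|\le L(\rho,m,T)$ uniformly in $t\in[0,T]$ and in $U$ with $\|U\|_{C_T}\le\rho$. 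Hence every component of $\mathcal{K}_m(U)$ is $L$-Lipschitz on $[0,T]$ uniformly over the ball, which is exactly equicontinuity.

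By Arzelà--Ascoli, (ii) and (iii) give that $\mathcal{K}_m(\{\|U\|_{C_T}\le\rho\})$ is relatively compact in $C([0,T];\RR^{2m+2})$, hence in $C_T(\RR;\RR^{2m+2})$; together with (i) this shows $\mathcal{K}_m$ is compact. The main obstacle I anticipate is the piecewise, jump-discontinuous $t$-dependence of the Green kernels \eqref{Ki}: a direct equicontinuity estimate would require controlling $\int_0^T|K_i(t_1,\tau)-K_i(t_2,\tau)|\,d\tau$ as $t_1\to t_2$ by carefully splitting the integral around $\tau=t_1$ and $\tau=t_2$ to absorb the unit jump. Routing through the ODE characterization of $\mathcal{A}_i(U),\mathcal{B}_i(U)$ sidesteps this entirely, since those functions are smooth with derivatives bounded uniformly on bounded sets.
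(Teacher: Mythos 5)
Your argument is correct, but it takes a genuinely different route from the paper's. The paper proves compactness by factoring each component $\mathcal{A}_i=\mathcal{A}^{(1)}_i\circ\mathcal{A}^{(2)}_i$, where $\mathcal{A}^{(1)}_i$ is the linear integral operator with the piecewise-continuous (weakly singular) kernel $K_i$, compact by Theorem 1.11 of \cite{Colton}, and $\mathcal{A}^{(2)}_i$ is the substitution map $U\mapsto\langle\psi_i,f(u_m,w_m)+\widetilde s\,\varphi^{*}\rangle_{V\times V^{*}}$, shown to carry bounded sets to bounded sets; compactness then follows by composition. You instead go through Arzel\`a--Ascoli, obtaining equicontinuity from the observation that each component of $\mathcal{K}_m(U)$ is the unique $T$-periodic solution of a scalar linear ODE with forcing bounded uniformly over the ball, hence uniformly Lipschitz. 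Your route is more self-contained and elementary: it avoids the citation, sidesteps the jump discontinuity of the Green kernel entirely, and — importantly — you explicitly address continuity of the nonlinear substitution map (via the polynomial structure of $f$ in the finite-dimensional space $V_m$), a point the paper's composition argument actually needs but only states boundedness for. Two small caveats: since \textbf{(h3)} only gives $\widetilde s\in L^{\infty}$, the forcing $h_i$ need not be continuous, so the components $\mathcal{A}_i(U)$ are in general only Lipschitz (absolutely continuous with $y'=-\lambda_i y+h_i$ a.e.) rather than $C^1$ as you assert; the uniform bound on the a.e.\ derivative still yields exactly the equicontinuity you need, so nothing breaks. Also, your identification of $C_T(\RR;\RR^{2m+2})$ with a closed subspace of $C([0,T];\RR^{2m+2})$ should carry the matching condition $U(0)=U(T)$, which your image functions satisfy by construction of the periodic Green function.
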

\begin{proof}
The integral operators defined by \eqref{inteRela1} and \eqref{inteRela2}, have piecewise continuously differentiable kernels $K_i(t,\tau), K_{b}(t,\tau), i = 0,1,\ldots,m,$ in $[0,T]\times [0,T],$ because these only have a jump discontinuity at the points of the form $(t,t)$. On other hand, we claim that each component $\mathcal{A}_i$ and $\mathcal{B}_i$ of $\mathcal{K}_m$ is a compact operator of $C_T(\RR;\RR^{2m+1})$ into $C([0,T];\RR)$. In fact, the operator $\mathcal{A}_i$ is the composition of two operators, $\mathcal{A}^{(1)}_i$ and $\mathcal{A}^{(2)}_i$, where 
\begin{align*}
&\mathcal{A}^{(1)}_i: u \in C([0,T],\RR) \mapsto \int^T_{0}K_{i}(t,\tau)u(\tau)d\tau \in C([0,T],\RR),\;\;\; \hbox{and}\\\\
&\mathcal{A}^{(2)}_i: U \in C_T(\RR;\RR^{2m+1}) \mapsto 
\left\langle \psi_i, f(u_m,w_m) + s(t)\varphi^{*}\right\rangle_{V\times V^{*}} \in C([0,T],\RR).
\end{align*}
Now, for each $i = 0,1,2,\ldots,m,$ the $\mathcal{A}^{(1)}_i$ is compact, see Theorem 1.11 in \cite{Colton}. Here, $C([0,T],\RR)$ is equipped with norm $$\left\|u\right\|_{\infty} = \sup_{t \in [0,T]}|u(t)|.$$

Furthermore, $\mathcal{A}^{(2)}_i$ maps bounded sets of $C_T(\RR;\RR^{2m+1})$ into bounded sets of $C([0,T];\RR)$. In fact, suppose that $U \in C_T(\RR;\RR^{2m+1})$ such that $\left\|U\right\|_{C_T(\RR;\RR^{2m+1})} \leq \widehat{M}$, then, for the respective functions $(u_m, w_m)$, we have 
\begin{align*}
\sqrt{\left\|u_m(t)\right\|^2_H + \left\|w_m(t)\right\|^2_H} \leq \widehat{M}_1,\;\;\; \hbox{for all $t \in [0,T]$},
\end{align*}
hence, making use of the norm equivalence, we have 
\begin{align*}
\sqrt{\left\|u_m(t)\right\|^2_V + \left\|w_m(t)\right\|^2_H} \leq \widehat{M}_2,\;\;\; \hbox{for all $t \in [0,T]$},
\end{align*}
for some constant $\widehat{M}_2 > 0$. Now, 
\begin{align*}
\left|\left\langle \psi_i, f(u_m(t),w_m(t)) + \widetilde{s}(t)\varphi^{*}\right\rangle_{V\times V^{*}}\right| \leq \left(\left\|f(u_m(t),w_m(t))\right\|_{V^{*}} + \hat{s}\mathcal{N}_{\mathcal{T}}\left\|\varphi\right\|_{L^2(\Gamma_1)}\right)\left\|\psi_i\right\|_V.
\end{align*}
Proceeding as in the proof of Proposition \ref{TheoPe}, it is possible to prove
\begin{align*}
&\left\|f(u_m(t),w_m(t))\right\|_{V^{*}} \leq K_1 \left\|f(u_m(\tau),w_m(\tau))\right\|_{L^{p'}(\Omega)}\\
& \leq K_1\left(A_1|\Omega|^{1/p'} + A_2K_2\widehat{M}_2^{p/p'} 
+ A_3\widehat{M}_2^{2/p'}\right),
\end{align*}
and so
\begin{align*}
\sup_{t \in [0,T]}\left|\left\langle \psi_i, f(u_m(t),w_m(t)) + \widetilde{s}(t)\varphi^{*}\right\rangle_{V\times V^{*}}\right| \leq \widehat{M}_3,
\end{align*}
for certain constant $\widehat{M}_3$.

Since, the operator $\mathcal{A}_i$ is the composition of a bounded operator with a compact operator, it follows that it is a compact operator for each $i = 0,\cdots,m$. The proof of the compactness of $\mathcal{B}_i$  is similar hence it will be omitted. In other words, the operator $\mathcal{K}_m: C_T(\RR;\RR^{2m+1})\rightarrow C_T(\RR;\RR^{2m+1})$ is a compact one.
\end{proof}

At this moment, we give the following proposition which is a consequence of the fixed point Schauder theorem and the previous two theorems. 
\begin{theorem}\label{prop2}
Under conditions of Proposition \ref{TheoPe}, the operator $\mathcal{K}_m$ has a fixed point $U^{(p)}_m \in B^{(m)}_R$ which is a $T$-periodic solution of the Faedo-Galerkin system (\ref{FGmat}), for each $m = 0,1,2,\cdots.$ Also, the associated pairs $\left(u^{(p)}_m,w^{(p)}_m\right)$ satisfy that
\begin{align*}
\left\|\left(u^{(p)}_m,w^{(p)}_m\right)\right\|_{C_T(\RR;V_m\times H_m)} \leq R, \;\;\; \hbox{for each $m = 0,1,\cdots.$} 
\end{align*}
\end{theorem}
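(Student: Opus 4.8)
The plan is to invoke Schauder's fixed point theorem for the operator $\mathcal{K}_m$ on the set $B^{(m)}_R$, and then read the fixed point back through the Farkas correspondence. First I would record that $B^{(m)}_R$, as already noted just before the statement, is a nonempty, convex, closed and bounded subset of the Banach space $C_T(\RR;\RR^{2m+2})$ (equivalently of $C_T(\RR;V_m\times H_m)$, via the stated topological isomorphism). Proposition~\ref{TheoPe} provides, under the hypotheses \eqref{ResEq}--\eqref{ResEq1}, the invariance $\mathcal{K}_m(B^{(m)}_R)\subseteq B^{(m)}_R$, and the preceding proposition shows that $\mathcal{K}_m$ is compact on $C_T(\RR;\RR^{2m+2})$; restricted to $B^{(m)}_R$ it is therefore a continuous self-map with relatively compact range. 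With all the hypotheses of Schauder's theorem in place, there is a fixed point $U^{(p)}_m\in B^{(m)}_R$, and this is carried out for every $m=0,1,2,\ldots$

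Second, I would translate this fixed point into a periodic solution of the Faedo-Galerkin system. By construction, the components \eqref{inteRela1}--\eqref{inteRela2} of $\mathcal{K}_m$ are precisely the Farkas operator \eqref{Faroper} associated with the quasilinear system \eqref{FGmat}; since the matrix $L_m$ is constant with spectrum $\{-\lambda_0,\ldots,-\lambda_m,-bc_3\epsilon\xi\}\subset(-\infty,0)$, the monodromy matrix $\Phi(T)=e^{TL_m}$ has no eigenvalue equal to $1$, so $I-\Phi(T)$ is invertible and the homogeneous system $U'=L_mU$ has only the trivial $T$-periodic solution. Hence Theorem~4.1.3 of \cite{Far} applies and gives that $U\in C_T(\RR;\RR^{2m+2})$ is a $T$-periodic solution of \eqref{FGmat} if and only if $\mathcal{K}_m(U)=U$; in particular $U^{(p)}_m$ is such a $T$-periodic solution.

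Finally, the norm bound is immediate: $U^{(p)}_m\in B^{(m)}_R$ means exactly, through the isomorphism that sends $U$ to the pair $(u^{(p)}_m,w^{(p)}_m)$ built via \eqref{resol1}, that $\sup_{t\in[0,T]}\sqrt{\|u^{(p)}_m(t)\|^2_V+\|w^{(p)}_m(t)\|^2_H}\leq R$, i.e. $\|(u^{(p)}_m,w^{(p)}_m)\|_{C_T(\RR;V_m\times H_m)}\leq R$, uniformly in $m$. I do not expect the present statement to pose a real obstacle — it is essentially the assembly of the two preceding propositions with Schauder's theorem and the Farkas dictionary. The one point that needs care is checking the hypotheses of Theorem~4.1.3 of \cite{Far} for this specific block-diagonal constant-coefficient system (invertibility of $I-\Phi(T)$, and the fact that fixed points of $\mathcal{K}_m$ are exactly the $T$-periodic solutions of \eqref{FGmat}); and, looking ahead, making sure that the constant $R$ produced here is genuinely independent of $m$, since that $m$-independence is exactly what the subsequent passage to the limit will rely on.
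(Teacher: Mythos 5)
Your proposal is correct and follows essentially the same route as the paper, which states Theorem \ref{prop2} as a direct consequence of Proposition \ref{TheoPe} (invariance of $B^{(m)}_R$), the compactness of $\mathcal{K}_m$, the convexity/closedness/boundedness of $B^{(m)}_R$, Schauder's theorem, and Theorem 4.1.3 of \cite{Far}. Your added checks (invertibility of $I-\Phi(T)$ from the negativity of the spectrum of $L_m$, and the $m$-independence of $R$) are exactly the points the paper leaves implicit, and they hold as you describe.
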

\subsection{About the convergence of the sequence $\left\{(u_m,w_m)\right\}^{\infty}_{m = 0}$ to a $T$-periodic global weak solution} 
In \cite{nues}, the authors proved that it is sufficient to consider uniformly bounded in $H\times H$ initial conditions, $(u_{0m}, w_{0m}), m = 0,1,\cdots,$ in order that the sequence formed with the solutions $(u_m, w_m), m = 0,1,\cdots,$ of the Cauchy problem \eqref{FGsys}-\eqref{FGintc} has a subsequence which converges to a global weak solution of problem \eqref{ecCn}-\eqref{condFPn}. In Theorem \ref{prop2} of Section \ref{sect1}, we prove that each Faedo-Galerkin system \eqref{FGsys2} has a $T$-periodic solution $U^{(p)}_m \in B^{(m)}_R$, it implies that the associated pairs $\left(u^{(p)}_m, w^{(p)}_m\right)$ are uniformly bounded in $C_T(\RR;V_m\times H_m)$. Hence we have
\begin{align*}
\left\|u^{(p)}_m(0)\right\|_H \leq \widetilde{R},\;\;\; \hbox{and}\;\;\; \left\|w^{(p)}_m(0)\right\|_H \leq \widetilde{R},
\end{align*}
for some constant $\widetilde{R} > 0$. In other words, if we take 
\begin{align*}
u_{0m} = u^{(p)}_m(0),\;\;\; w_{0m} = w^{(p)}_m(0),
\end{align*}
the solution of the corresponding Cauchy problem is $T$-periodic and its initial conditions are uniformly bounded in $H$. The combination of these two results implies that the sequence $\left\{\left(u^{(p)}_m,w^{(p)}_m\right)\right\}^{\infty}_{m = 0}$ has a subsequence which converges to a $T$-periodic global weak solution of problem \eqref{ecCn}-\eqref{condFPn}. This is the main result of our paper.

Because of the role played here by the results obtained in \cite{nues}, we briefly summarize these below, as a complement to this subsection.

We have 
\begin{theorem}\label{imporTheo}
If there is a positive constant $C$ that does not depend on $m$, such that
\begin{align*}
\left\|u_{0m}\right\|_H \leq C, \;\;\; \left\|w_{0m}\right\|_H \leq C,
\end{align*}
then there is a subsequence of $\left\{(u_m,w_m)\right\}^{\infty}_{m = 0}$, formed with solutions of \eqref{FGsys}-\eqref{FGintc}, also denoted by $\left\{(u_m,w_m)\right\}^{\infty}_{m = 0}$, which converges to a global weak solution $(u,w)$. The convergence of this subsequence is such that we have
\begin{align}\label{conv1}
u_m \rightarrow u, \;\;\; \hbox{strongly in $L^2(Q_I),$}\;\;\; w_m \rightarrow w, \;\;\; \hbox{strongly in $L^2(Q_I).$} 
\end{align}
for all interval $I = (0, t_0),$ with $t_0 > 0.$
\end{theorem}
\begin{remark}
From \eqref{conv1} and since that $(u,w)$ is a global weak solution, it turns out that if $u_m$ and $w_m$ are $T$-periodic for all $m$, then $u$ and $w$ are also $T$-periodic a.e.
\end{remark}
Proof of Theorem \ref{imporTheo} can be obtained from the following two propositions.
\begin{proposition}\label{lemma1}
Consider the Cauchy problem \eqref{FGsys}-\eqref{FGintc}, with uniformly bounded initial conditions $(u_{0m}, w_{0m}) \in H\times H$, that is, there is a constant $C_0 > 0$ such that
\begin{align*}
\left\|u_{0m}\right\|_H \leq C_0 \:\: \hbox{and} \:\: \left\|w_{0m}\right\|_H \leq C_0
\end{align*}
for all $m = 0,1,\ldots$ Then, the problem \eqref{FGsys}-\eqref{FGintc} has a solution for all $t > 0$. In addition, there are constants $C_i, i = 1,2,3,4$ such that for all $t_0 > 0$ the following a priori estimates are fulfilled: 
\begin{align*}
&\lambda\left\|u_m(t)\right\|^2_H + \left\|w_m(t)\right\|^2_H \leq C_1,\;\;\; \hbox{for all $t \in [0,t_0]$,}\\
&\left\|u_m\right\|_{L^p(Q_I)\cap L^2(I;V)} \leq C_2,\;\;\; \left\|u'_m\right\|_{L^{p'}(Q_I) + L^2(I;V')} \leq C_3,\;\;\; \left\|w'_m\right\|_{L^{2}(Q_I)} \leq C_4,
\end{align*} 
where $I = (0,t_0), \lambda$ is the positive constant that appears Remark \ref{com1}, and the norms
\begin{align*}
\left\|\cdot\right\|_{L^p(Q_I)\cap L^2(I;V)} &= \max\left\{\left\|\cdot\right\|_{L^p(Q_I)},\left\|\cdot\right\|_{L^2(I;V)}\right\},\\
\left\|u\right\|_{L^{p'}(Q_I) + L^2(I;V')} &= \inf_{u = u_1 + u_2}\left\{\left\|u_1\right\|_{L^{p'}(Q_I)} + \left\|u_2\right\|_{L^2(I;V')}\right\},
\end{align*}
have been used. Here, 
\begin{align*}
u'_m(t) = \sum^m_{i = 0}u^{(m)'}_i(t)\psi_i \in V_m,\;\;\; w'_m(t) = \sum^m_{i = 
0}w^{(m)'}_i(t)\psi_i \in V_m.
\end{align*}
\end{proposition}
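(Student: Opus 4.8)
\textbf{Proof proposal for Proposition \ref{lemma1}.}

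The plan is to establish the four a priori estimates by testing the Faedo-Galerkin system against suitable combinations of its own coefficients, in the classical energy-estimate fashion, and then to conclude global existence by a continuation argument. First I would address the first estimate, which is the keystone. Multiply the $u$-equations of \eqref{FGsys} by $\lambda u^{(m)}_i$ and the $w$-equations by $w^{(m)}_i$, sum over $i=0,\dots,m$, and add. Recognizing $\sum_i \lambda_i \lambda (u^{(m)}_i)^2 = \lambda a(u_m,u_m)$ up to the coercivity shift and $\sum_i \lambda u^{(m)'}_i u^{(m)}_i = \tfrac{\lambda}{2}\tfrac{d}{dt}\|u_m\|_H^2$, we obtain
\begin{align*}
\frac{1}{2}\frac{d}{dt}\left(\lambda\|u_m(t)\|_H^2 + \|w_m(t)\|_H^2\right) + \lambda\, a(u_m,u_m) = -\int_\Omega\left(\lambda f(u_m,w_m)u_m - \widehat{g}(u_m,w_m)w_m\right) + \lambda\widetilde{s}(t)\langle u_m,\varphi^*\rangle_{V\times V^*}.
\end{align*}
Now invoke the structural inequality from Remark \ref{com1}, namely $\lambda u f(u,w) - w\widehat{g}(u,w) \ge n|u|^p - r(|u|^2+|w|^2) - q$, pointwise and integrate over $\Omega$; this turns the reaction term into something bounded below by $n\|u_m\|_{L^p}^p - r(\|u_m\|_H^2+\|w_m\|_H^2) - q|\Omega|$. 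Using coercivity $\alpha\|u_m\|_V^2 \le a(u_m,u_m)+\alpha\|u_m\|_H^2$, the source term bound $|\langle u_m,\varphi^*\rangle|\le \mathcal{N}_{\mathcal{T}}\|\varphi\|_{L^2(\Gamma_1)}\|u_m\|_V \le \tfrac{\alpha}{2}\|u_m\|_V^2 + C$ via Young's inequality, and absorbing, I get a differential inequality of the form $\tfrac{d}{dt}y(t) + c_1\|u_m\|_V^2 + c_1 n\|u_m\|_{L^p}^p \le c_2 y(t) + c_3$ where $y(t) = \lambda\|u_m(t)\|_H^2 + \|w_m(t)\|_H^2$. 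Gronwall's lemma, together with the uniform bound $y(0)\le (\lambda+1)C_0^2$ from the hypothesis, yields the first estimate with $C_1$ independent of $m$ (but a priori depending on $t_0$; one should check whether the sign structure allows a $t_0$-independent bound, which is what genuine periodicity ultimately needs, but the statement as written only claims dependence through $I$).

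Next, the second estimate follows by integrating the same differential inequality over $(0,t_0)$: the terms $c_1\int_0^{t_0}\|u_m\|_V^2$ and $c_1 n\int_0^{t_0}\|u_m\|_{L^p}^p$ are controlled by $y(0) + c_2\int_0^{t_0}y + c_3 t_0$, all already bounded, giving $\|u_m\|_{L^2(I;V)}$ and $\|u_m\|_{L^p(Q_I)}$ bounds, hence $C_2$. For the third estimate, I would read $u'_m$ off from \eqref{FGsys}: $u'_m = -A_m u_m - P_m f(u_m,w_m) + \widetilde{s}(t)P_m\varphi^*$ where $A_m$ is the restriction of $\mathcal{A}$ to $V_m$; split $u'_m$ into the part in $L^2(I;V^*)$ (the linear term $-A_mu_m$, bounded by $\mathcal{M}\|u_m\|_V$, and the source, bounded in $L^\infty(I;V^*)\subset L^2(I;V^*)$) and the part in $L^{p'}(Q_I)$ (the nonlinear term, bounded via the $L^{p'}$ estimate of Remark \ref{com1} together with $\|u_m\|_{L^p(Q_I)}\le C_2$ and the $L^\infty(I;H)$ bound on $w_m$), using that $\|P_m\|_{\mathcal{L}(V^*)}\le 1+\mathcal{M}/\alpha$; this delivers $C_3$. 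The fourth estimate is the easiest: from the $w$-equation, $\|w'_m\|_{L^2(Q_I)} \le \epsilon b\left(\|u_m\|_{L^2(Q_I)} + \xi c_3\|w_m\|_{L^2(Q_I)}\right)$, both terms already controlled by the first and second estimates, giving $C_4$.

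Finally, global existence: the local solution on $[0,t_m)$ from the ODE theory can only fail to extend if $|U_m(t)|\to\infty$ as $t\uparrow t_m$; but the first a priori estimate, applied on any $[0,t_0]$ with $t_0 < t_m$, bounds $\|u_m(t)\|_H^2+\|w_m(t)\|_H^2$ uniformly on $[0,t_0]$, which in the finite-dimensional space $V_m\times H_m$ (all norms equivalent, $m$ fixed) bounds $|U_m(t)|$; hence $t_m = \infty$. The main obstacle I anticipate is bookkeeping in the first estimate — correctly combining the Remark \ref{com1} inequality with the coercivity shift $\alpha\|u_m\|_H^2$ and choosing the Young's-inequality splitting so that all the $\|u_m\|_V^2$ contributions on the right are strictly absorbed by the left, while keeping the resulting constants manifestly independent of $m$. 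A secondary subtlety is that the problem \eqref{FGsys} here has $\widehat{g}$ in its general form whereas \eqref{FGsys2} uses the explicit linear expression; one must make sure the structural inequality of Remark \ref{com1}, stated for the abstract $\widehat g$, is the one applied, so that the $w$-dissipation term $-bc_3\epsilon\xi\|w_m\|$-type contribution appears with the correct sign.
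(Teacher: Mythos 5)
The paper does not actually prove this proposition: it is imported verbatim from \cite{nues} and is, in substance, the a priori estimate lemma of \cite{Yves}, so there is no in-text argument to compare against line by line. Your route — test the Faedo--Galerkin system against $(\lambda u^{(m)}_i, w^{(m)}_i)$, invoke the structural inequality $\lambda u f(u,w) - w\widehat{g}(u,w)\ge n|u|^p - r(|u|^2+|w|^2) - q$ from Remark \ref{com1}, absorb the source via coercivity and Young, apply Gronwall, integrate in time for the $L^2(I;V)\cap L^p(Q_I)$ bound, read $w'_m$ off its equation, and close global existence by continuation in the finite-dimensional space — is exactly the intended one, and those four steps are correct as you present them. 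Your side remarks (that $C_1$ may depend on $t_0$, and that the dissipative sign of the $w$-equation is already packaged inside the structural inequality) are also apt.

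The one genuine gap is in your third estimate. You place the nonlinear term $P_m f(u_m,w_m)$ in $L^{p'}(Q_I)$ ``using that $\left\|P_m\right\|_{\mathcal{L}(V^{*})}\le 1+\mathcal{M}/\alpha$,'' but the $V^{*}$-operator bound says nothing about $\left\|P_m f\right\|_{L^{p'}(\Omega)}$: the spectral projections $P_m$ are uniformly bounded on $H$, $V$ and $V^{*}$, not (as far as anything established here goes) on $L^{p'}(\Omega)$, so the claim that the projected nonlinearity stays uniformly bounded in $L^{p'}(Q_I)$ is unjustified as written. What the uniform $V^{*}$-bound does give, combined with $\left\|f(u_m,w_m)\right\|_{V^{*}}\le K_1\left\|f(u_m,w_m)\right\|_{L^{p'}(\Omega)}$ and the time integrability supplied by your first two estimates (note $\left\|f\right\|_{L^{p'}(\Omega)}^{p'}\lesssim 1+\left\|u_m\right\|_{L^p}^{p}+\left\|u_m\right\|_{L^p}^{p'}\left\|w_m\right\|_{H}^{p'}$), is a uniform bound on $u'_m$ in $L^{p'}(I;V')$ — which is precisely the space the paper itself uses when it summarizes the proposition just below its statement, and which is all that the subsequent compactness argument (Theorem 5.1 of \cite{Lions}) requires. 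So either retreat to the $L^{p'}(I;V')$ bound, or, to obtain the sum-space bound literally as stated, run the estimate by duality against test functions $v$, replacing $v$ by $P_m v$ and controlling $\left\|P_m v\right\|_{L^p(\Omega)}\le K_2\left\|P_m v\right\|_{V}\le K_2(1+\mathcal{M}/\alpha)\left\|v\right\|_{V}$; in either case the fix is routine but the step as you wrote it does not go through.
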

Finally, we provide the following proposition which is also proved in \cite{nues}.
\begin{proposition}\label{teo1}
There is a subsequence of $\left\{(u_m,w_m)\right\}^{\infty}_{m = 0}$, denoted by convenience in the same form, such that for all $t_0 > 0$ and $I = (0,t_0)$ satisfy
\begin{align*}
&u_m \rightarrow u,\;\;\; \hbox{weakly in $L^p(Q_I)\cap L^2(I;V)$},\;\;\;u'_m \rightarrow \widetilde{u},\;\;\; \hbox{weakly in $L^{p'}(Q_I) + L^2(I;V')$},\\
&w_m \rightarrow w,\;\;\; \hbox{weakly in $L^2(Q_I)$},\;\;\; w'_m \rightarrow \widetilde{w},\;\;\; \hbox{weakly in $L^2(Q_I)$},\\
\end{align*}
and 
\begin{align*}
u_m \rightarrow u,\;\;\; \hbox{strongly in $L^2(Q_I)$},\;\;\; w_m \rightarrow w,\;\;\; \hbox{strongly in $L^2(Q_I)$}.
\end{align*}
Furthermore, $(u,w)$ is a global weak solution of problem \eqref{ecCn}-\eqref{condFPn}.
\end{proposition}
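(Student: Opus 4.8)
The plan is to realize $(u,w)$ as a weak limit of the Faedo--Galerkin sequence, the three ingredients being the uniform a priori bounds of Proposition~\ref{lemma1}, weak compactness, and an Aubin--Lions type compactness argument, followed by a passage to the limit in the Galerkin identities. Fix $t_0>0$ and put $I=(0,t_0)$. By Proposition~\ref{lemma1} the sequences $\{u_m\}$, $\{u'_m\}$, $\{w_m\}$, $\{w'_m\}$ are bounded in the reflexive Banach spaces $L^p(Q_I)\cap L^2(I;V)$, $L^{p'}(Q_I)+L^2(I;V')$, $L^2(Q_I)$ and $L^2(Q_I)$ respectively, so Banach--Alaoglu yields a subsequence along which all four converge weakly, say to $u$, $\widetilde u$, $w$, $\widetilde w$. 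A Cantor diagonalization over $t_0=1,2,\ldots$ produces a single subsequence valid on every bounded interval, with consistent limits. Testing the Galerkin relations \eqref{FGsys} against $\psi_i\phi$, $\phi\in\mathcal D(I)$, and passing to the limit identifies $\widetilde u=u'$ and $\widetilde w=w'$ in $\mathcal D'(I)$, which gives the second and fourth stated convergences; weak lower semicontinuity of the norms gives the regularity $u\in L^p(Q_I)\cap L^2(I;V)$, $w\in L^2(Q_I)$.

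For the strong convergences I would proceed as follows. Since $u_m$ is bounded in $L^2(I;V)$ and $u'_m$ in $L^{p'}(I;V')$, and the Gelfand triple $V\hookrightarrow H\hookrightarrow V'$ has compact first embedding (Rellich, using \textbf{(h1)}), the Aubin--Lions theorem (Theorem~5.1 of \cite{Lions}) yields, along a further subsequence, $u_m\to u$ strongly in $L^2(I;H)=L^2(Q_I)$. For $w_m$ I would exploit the linear structure of the second equation: on $H_m$ one has $w'_m+\epsilon b\xi c_3 w_m=\epsilon b u_m$ (because $u_m\in H_m$), so subtracting the analogous identities for indices $m$ and $n$, pairing with $w_m-w_n$ in $H$ and applying Gr\"onwall's inequality gives
\begin{align*}
\|w_m-w_n\|_{L^2(Q_I)}^2\ \le\ C(t_0)\Bigl(\|w_m(0)-w_n(0)\|_H^2+\|u_m-u_n\|_{L^2(Q_I)}^2\Bigr);
\end{align*}
together with the Cauchy property of $\{u_m\}$ in $L^2(Q_I)$ and the relative compactness in $H$ of the (uniformly bounded) initial values, this forces $\{w_m\}$ to be Cauchy in $L^2(Q_I)$, hence $w_m\to w$ strongly there. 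This is the route followed in \cite{nues}. Passing to a further subsequence we may also assume $u_m\to u$ and $w_m\to w$ a.e.\ in $Q_I$.

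It remains to show that $(u,w)$ satisfies \eqref{Varprobm}. Fix $k$; for every $m\ge k$ the $k$-th Galerkin equation reads $\frac{d}{dt}(u_m,\psi_k)+a(u_m,\psi_k)+\int_\Omega f(u_m,w_m)\psi_k=\widetilde s(t)\langle\psi_k,\varphi^*\rangle$, with the companion $w$-equation. Multiplying by $\phi\in\mathcal D(I)$ and integrating over $I$, every term except the nonlinear one passes to the limit by the weak convergences already obtained. For the nonlinear term, $f$ is continuous, so $f(u_m,w_m)\to f(u,w)$ a.e.\ in $Q_I$; by the growth estimate of Remark~\ref{com1} and the $L^p(Q_I)$, $L^2(Q_I)$ bounds on $u_m,w_m$, the family $\{f(u_m,w_m)\}$ is bounded in $L^{p'}(Q_I)$; Lemma~1.3 of \cite{Lions} then gives $f(u_m,w_m)\rightharpoonup f(u,w)$ weakly in $L^{p'}(Q_I)$, and since $\psi_k\phi\in L^p(Q_I)$ the nonlinear term passes to the limit as well. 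This yields \eqref{Varprobm} for $v=h=\psi_k$, $k\in\NN$; density of $\mathrm{span}\{\psi_k\}$ in $V$ and in $H$, together with continuity of all the terms in $v\in V$ and $h\in H$, extends \eqref{Varprobm} to arbitrary $v\in V$, $h\in H$. Since $t_0>0$ was arbitrary, $(u,w)$ is a global weak solution.

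The main obstacle is the passage to the limit in the nonlinearity $f(u_m,w_m)$: weak convergence of $u_m$ and $w_m$ alone does not identify its weak limit, so the strong $L^2(Q_I)$-convergence (hence a.e.\ convergence along a subsequence) is indispensable. For $u_m$ this rests on Aubin--Lions and thus on the derivative bound for $u'_m$ in Proposition~\ref{lemma1}; for $w_m$ it rests on the linear form of the second equation, and securing it uniformly in $m$ — in particular controlling the initial values $w_m(0)$ — is the delicate point, which is precisely where the results imported from \cite{nues} do the real work.
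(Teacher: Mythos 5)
Your overall route is the same one the paper takes (it defers the details to \cite{nues} and sketches exactly this: the a priori bounds of Proposition~\ref{lemma1}, weak compactness, Theorem~5.1 of \cite{Lions} for strong $L^2(Q_I)$ convergence, a.e.\ convergence of a further subsequence, and Lemma~1.3 of \cite{Lions} to identify the weak $L^{p'}(Q_I)$ limit of $f(u_m,w_m)$). You are in fact more careful than the paper on one point: Aubin--Lions applies to $u_m$ (bounded in $L^2(I;V)$ with $u'_m$ bounded in $L^{p'}(Q_I)+L^2(I;V')$ and $V\hookrightarrow H$ compact), but not to $w_m$, which carries no spatial regularity beyond $L^2(I;H)$; a bound on $w_m$ and $w'_m$ in $L^2(I;H)$ alone gives no strong $L^2(Q_I)$ compactness, so a separate argument for $w_m$ is genuinely required, and exploiting the linear ODE $w'_m+\epsilon b\xi c_3 w_m=\epsilon b\,u_m$ is the right idea.

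There is, however, one concrete flaw in how you close that argument: you invoke ``the relative compactness in $H$ of the (uniformly bounded) initial values'' $w_m(0)$. Boundedness in the infinite-dimensional space $H=L^2(\Omega)$ gives only \emph{weak} relative compactness; the closed ball is not norm-compact, so you cannot extract a norm-convergent subsequence of $w_m(0)$ from the bound $\|w_m(0)\|_H\le C$ alone, and your Cauchy/Gr\"onwall estimate then controls only the $u$-contribution to $w_m-w_n$, not the term $e^{-\epsilon b\xi c_3 t}\bigl(w_m(0)-w_n(0)\bigr)$. Two repairs are available. First, in the application actually made in this paper the data are $T$-periodic, so $w_m(0)=(1-e^{-\epsilon b\xi c_3 T})^{-1}\epsilon b\int_0^T e^{-\epsilon b\xi c_3(T-s)}u_m(s)\,ds$, which converges strongly in $H$ along the subsequence on which $u_m$ converges strongly in $L^2(Q_{(0,T)})$; this restores your Cauchy argument. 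Second, and more simply, strong convergence of $w_m$ is not needed to pass to the limit in the nonlinearity: since $f(u,w)=f_1(u)+f_2(u)w$ with $f_2(u)w$ \emph{linear} in $w$, strong $L^2(Q_I)$ convergence of $u_m$ (hence of $u_m\psi_k\phi$) combined with mere weak $L^2(Q_I)$ convergence of $w_m$ already yields $\int_{Q_I}f_2(u_m)w_m\psi_k\phi\to\int_{Q_I}f_2(u)w\psi_k\phi$, and the $f_1(u_m)$ term is handled by a.e.\ convergence of $u_m$ plus Lemma~1.3 of \cite{Lions}. Either fix should be stated explicitly; as written, the strong $L^2(Q_I)$ convergence of $w_m$ is not established.
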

Proposition \ref{lemma1} means that $u_m, w_m, u'_m$ and $w'_m$ are uniformly bounded in $L^2(I;V), L^2(Q_I), L^{p'}(I;V')$ and $L^2(Q_I)$, respectively. This implies, according to Theorem 5.1 in \cite{Lions}, that there is a subsequence of $\left\{(u_m,w_m)\right\}^{\infty}_{m = 0}$ which converges in $L^2(Q_I)$. 
\subsection{Ionic-diffusive relations leading to the existence of a $T$-periodic weak solution of the monodomain model}
In this section, we give a theorem about the existence of weak $T$-periodic solutions of the monodomain model in the sense of Definition \ref{wealsolution}. The result is a consequence of those shown in the previous section. Specifically, Theorem \ref{prop2} which established the existence of a $T$-periodic and uniformly bounded sequence of Faedo-Galerkin approximations, $\left(u^{(p)}_m,w^{(p)}_m\right) \in C_T(\RR;V_m\times H_m)$, and Theorem \ref{imporTheo} asserting the convergence of these approximations to a $T$-periodic weak solution of \eqref{ecCn}-\eqref{condFPn}. Below, when we talk about ionic-diffusive relations, we refer to relations between the parameters of the monodomain model, considering two classes: the parameters involved in the functions $f_{ion}$ and $g$ called ionic parameters, and the parameters $K_1, K_2$ appearing in Proposition \ref{TheoPe} which are referred by us as the diffusive parameters. 

Observe that out of all these results only Proposition \ref{TheoPe} imposes restrictions on the parameters of the model. The inequalities \eqref{ResEq} and \eqref{ResEq1} set up relations between the ionic parameters, the parameters associated with the diffusion, the geometry of the heart, and the period $T$. Although these are sufficient conditions, they suggest the idea that periodic rhythm is a consequence of the coordination between the process of generation of the action potential at the cellular level, and the propagation of this potential in overall cardiac tissue. 

Before giving the result of existence, we analyze the inequality \eqref{ResEq} in a detailed way, in order to find sufficient conditions for this to be satisfied. These new restrictions of the ionic-diffusive parameters are interpreted in a physiological sense confirming the medical observations in some cardiac pathologies associated with the ionic channels, see \cite{Manlio1,Manlio2,Manlio3}. 

We introduce the following notation
\begin{align*}
\kappa &:= \frac{\sqrt{2}}{2\left(1 + \frac{\mathcal{M}}{\alpha}\right)},\;\;\; \beta := A_2K_1K_2,\;\;\; \gamma := A_3 K_1,\;\;\; \delta := A_1K_1\left|\Omega\right|^{3/4} + \hat{s}\mathcal{N}_{\mathcal{T}}\left\|\varphi\right\|_{L^2(\Gamma_1)},
\end{align*}
where $A_1, A_2$ and $A_3$ depend on the model parameters (see Remark \ref{com1}), $K_1$ and $K_2$ are constants that arise in the proof of Proposition \ref{TheoPe}, and $\mathcal{M}, \alpha$ constitute parameters associated to the bilinear form $a$, see Section \ref{epi1}. On the other hand, if we define  
\begin{align}
\label{HT}h(T) &= h(T,c_4) = \frac{T}{1 - e^{-\frac{\epsilon c_4}{C} T}},\\
\label{PR}p(R) &= p(R,\kappa,\beta,\gamma,\delta) = \frac{\kappa R}{\beta R^3 + \gamma R^{3/2} + \delta},
\end{align}
then inequality \eqref{ResEq} can be written in the form 
\begin{align}\label{R-T}
h(T) \leq p(R).
\end{align}
\begin{remark}
Note that \eqref{R-T} implies an explicit relation of $T$ with $R$, and an implicit relation between the ionic-diffusive parameters $\kappa,\beta,\gamma,\delta,c_4$. The behavior of the functions $ h(T)$ and $ p(R)$ depends on these parameters. As we will see later, it is possible to show that there are finite intervals, which also depend on the ionic-diffusive parameters, $[R_{min}, R_{max}]$ and $[T_{min},T_{max}]$ where \eqref{R-T} is satisfied. In this sense, it can happen $T_{max} - T_{min}$ to be large, for example, for certain values of the ionic-diffusive parameters. If the condition \eqref{R-T} were necessary and sufficient for the existence of a $T$-periodic, then $\left[\dfrac{2\pi}{T_{max}},\dfrac{2\pi}{T_{min}}\right]$ is the interval of the admissible frequencies of the cardiac rhythm, out of this interval, the heart eventually loses its periodic rhythm. Observe also that if $T_{max} - T_{min}$ be large, then $\left[\dfrac{2\pi}{T_{max}},\dfrac{2\pi}{T_{min}}\right]$ is small. Obviously, \eqref{R-T} is not a necessary condition however, the given interpretation agrees what it was observed by the cardiologists. Some channel ionic diseases ("channelopathies") induce to a person to present ventricular fibrillation either with very slow heart rates (i. e. some types of Long QT syndrome and Brugada syndrome) or with fast heart rates (i. e. CPVT), see \cite{Manlio1, Manlio2, Manlio3}. 
\end{remark}

Now, let us give the central theorem. 

\begin{theorem}\label{imptTheo}
Assume the ionic-diffusive parameters satisfy the relations 
\begin{align}\label{temp11}
\xi c_3 \geq \sqrt{2},
\end{align}
and 
\begin{align}\label{temp12}
\frac{\kappa}{\delta} < \frac{C}{\epsilon c_4} < p(R^{*}),
\end{align}
where 
\begin{align}\label{R*}
R^{*} = \left(\frac{\sqrt{\gamma^2 + 32\beta\delta} - \gamma}{8\beta}\right)^{2/3}.
\end{align}
Then, two intervals $[R_{min},R_{max}]$ and $[T_{min},T_{max}]$ can be found, such that, for all $R \in [R_{min}, R_{max}]$ there is a \, $T$-periodic global weak solution $(u,w)$, of the monodomain model \eqref{ecCn}-\eqref{condFPn} with period $T \in [T_{min},T_{max}]$. This periodic solution satisfies the following estimate  
\begin{align*}
\sup_{t \in [0,T]}\sqrt{\left\|u(t)\right\|^2_{V} + \left\|w(t)\right\|^2_{H}} \leq R.
\end{align*}
\end{theorem} 
As we mentioned before, Theorem \ref{imptTheo} is a corollary of previously obtained results. Hence we do not believe necessary to reproduce it here. However, we want to emphasize that \eqref{temp12} is a condition, given in terms of the ionic-diffusive parameters, which ensures \eqref{ResEq} is met. This is easily shown making an analysis of the behavior of functions $h = h(T)$ and $p = p(R)$ given in \eqref{HT} and \eqref{PR}, respectively. It can be seen that the condition \eqref{temp12} means that the minimum value of the function $h(T)$ is between the minimum and maximum values of the function $p(R)$. If this is the case, the graphs of the functions are being cut at two points, as it is seen in Figure \ref{fi1}. The intervals $[R_{min},R_{max}]$ and $[T_{min},T_{max}]$ can be found from the interception points of these curves. In Figure \ref{fi1}, the curves 2 and 3 represent the case when the ionic-diffusive parameters do not satisfy \eqref{temp12}.
\begin{figure}[H]
\centering
\includegraphics[scale=0.8]{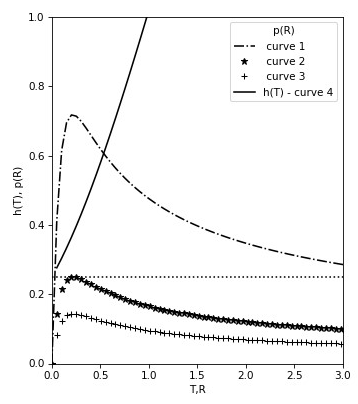}  
\caption{Behavior of the functions $h$ and $p$}
\label{fi1}
\end{figure}
The graphs in Figure \ref{fi1} have been calculated from the parameters given in \cite{Sundnes} for the Roger-McCulloch model, and assuming that $\epsilon, \xi$ are big enough. The parameters $K_1, K_2, \frac{\mathcal{M}}{\alpha}$ have also been estimated in order to obtain $\kappa$ and these are related with the conductivity $\widehat{\sigma}$ and $\Omega$. See Table \ref{tabla1} and Table \ref{tabla2}.
\begin{table}[H]
\begin{center}
\begin{tabular}{|l|l|}
\hline
parameter & value \\
\hline \hline
$\epsilon$ & $0.032$ \\ \hline
$\xi$ & $3.75$ \\ \hline
$\beta$ & $0.0001$ \\ \hline
$\gamma$ & $1$ \\ \hline
$\delta$ & $0.001$ \\ \hline
\end{tabular}
\caption{Table of parameters}
\label{tabla1}
\end{center}
\end{table}
\begin{table}[H]
\begin{center}
\begin{tabular}{|l|l|}
\hline
$\kappa$ & curve \\
\hline \hline
$0.5$ & curve 1 \\ \hline
$0.174$ & curve 2 \\ \hline
$0.1$ & curve 3 \\ \hline
\end{tabular}
\caption{Table of parameters}
\label{tabla2}
\end{center}
\end{table}
\section{Conclusions}
In this paper, we have obtained a result of the existence of weak periodic solutions of the monodomain model for a heart isolated from the torso. In addition to the theoretical-abstract study of the model in order to demonstrate the existence of periodic solutions of the model, one of our fundamental objectives has been to contribute to the understanding of the causes that underlie the generation or loss of heart rhythm. For that, we make an interpretation of the relationships between the ionic and diffusive parameters \eqref{ResEq}-\eqref{ResEq1} given in Proposition \ref{TheoPe}, which turn out to be sufficient for the existence of periodic solutions of period $T$ equal to the activation period of the endocardium. A careful analysis of these relationships, performed in Section 3.3 and summarized in Comment 11, allows us to postulate that the periods with which the heart pulsates are within an interval $[T_1, T_2]$. In other words, we cannot guarantee that there are periodic solutions with periods outside of this interval. From the above, two conclusions can be drawn:
\begin{enumerate}
\item First of all, our heart beats sometimes with high frequencies (when we exercise or get scared), sometimes with low frequencies, for example, when we sleep. However, if for some reason our frequency falls below the minimum frequency determined by the extreme right of the period interval, $ T_2 $, or rises above the maximum value determined by $ T_1 $, then our heart may eventually miss its period rhythm.
\item On the other hand, the size of the period interval $[T_1, T_2]$ depends on the parameters of the model. In other words, it depends on the physiological characteristics of the heart under study. For example, diseases associated with ion channels are reflected in the models with drastic changes in the values of the ionic parameters compared to the values associated with a healthy heart, decreasing the size of the interval $[T_1, T_2]$. For example, a person suffering from Brugada Syndrome may go into fibrillation when sleeping, his or her heart frequency being low but consider to be normal heart frequency in case healthy hearts, see \cite{Manlio2}.
\end{enumerate}
\section*{Acknowledgments}
The authors acknowledge to Consejo Nacional de Ciencia y Tecnología from Mexico the financial support through project A1-S-36879. The authors thankfully acknowledge the computer resources, technical expertise and support provided by the Laboratorio Nacional de Superc\'omputo del Sureste de M\'exico, CONACYT network of national laboratories. The authors also want to thank the English Language graduate Mariana Tamayo for her support in reviewing the language.

\bibliography{mybibfile}

\end{document}